\DeclareMathOperator{\gr}{gr}
\DeclareMathOperator{\Der}{Der}
\DeclareMathOperator{\ad}{ad}
\DeclareMathOperator{\SL}{SL}
\DeclareMathOperator{\GL}{GL}
\DeclareMathOperator{\End}{End}
\DeclareMathOperator{\Sym}{Sym}
\DeclareMathOperator{\adj}{adj}
\DeclareMathOperator{\Char}{char}
\DeclareMathOperator{\tr}{tr}
\begin{document}
\theoremstyle{plain}
\newtheorem{MainThm}{Theorem}
\renewcommand{\theMainThm}{\Alph{MainThm}}
\newtheorem{MainCor}{Corollary}
\renewcommand{\theMainCor}{\Alph{MainCor}}
\newtheorem*{trm}{Theorem}
\newtheorem*{lem}{Lemma}
\newtheorem*{prop}{Proposition}
\newtheorem*{defn}{Definition}
\newtheorem*{thm}{Theorem}
\newtheorem*{thmA}{Theorem A}
\newtheorem*{thmB}{Theorem B}
\newtheorem*{thmC}{Theorem C}
\newtheorem*{thmAA}{Theorem A'}
\newtheorem*{thmBB}{Theorem B'}
\newtheorem*{thmCC}{Theorem C'}
\newtheorem*{example}{Example}
\newtheorem*{cor}{Corollary}
\newtheorem*{conj}{Conjecture}
\newtheorem*{hyp}{Hypothesis}
\newtheorem*{thrm}{Theorem}
\newtheorem*{quest}{Question}
\theoremstyle{remark}
\newtheorem*{rem}{Remark}
\newtheorem*{rems}{Remarks}
\newtheorem*{notn}{Notation}
\newcommand{\FFp}{K}
\newcommand{\Fp}{\mathbb{F}_p}
\newcommand{\Fq}{\mathbb{F}_q}
\newcommand{\Zp}{\mathbb{Z}_p}
\newcommand{\Qp}{\mathbb{Q}_p}
\newcommand{\Kr}{\mathcal{K}}
\newcommand{\Rees}[1]{\widetilde{#1}}
\newcommand{\invlim}{\lim\limits_{\longleftarrow}}
\newcommand{\Md}[1]{\mathcal{M}(#1)}
\newcommand{\Pj}[1]{\mathcal{P}(#1)}
\newcommand{\G}[2]{\mathcal{G}_{#1}(#2)}
\newcommand{\fm}{\mathfrak m}
\newcommand{\Hom}{\operatorname{Hom}}
\newcommand{\Frac}{\operatorname{Frac}}
\newcommand{\Kdim}{\operatorname{Kdim}}
\newcommand{\GKdim}{\operatorname{GKdim}}
\newcommand{\Ext}{\operatorname{Ext}}
\newcommand{\injdim}{\operatorname{injdim}}
\let\le=\leqslant  \let\leq=\leqslant
\let\ge=\geqslant  \let\geq=\geqslant

\title[Reflexive ideals in Iwasawa algebras ]
{Nonexistence of reflexive ideals in \\
Iwasawa algebras of Chevalley type}

\author{K. Ardakov, F. Wei and J. J. Zhang}

\address{(K. Ardakov) School of Mathematical Sciences, University of Nottingham, University Park,
Nottingham, NG7 2RD, United Kingdom}

\email{Konstantin.Ardakov@nottingham.ac.uk}

\address{(F. Wei) Department of Applied Mathematics,
Beijing Institute of Technology, Beijing, 100081, P. R. China}

\email{daoshuo@bit.edu.cn}

\address{(J. J. Zhang) Department of Mathematics, Box 354350,
University of Washington, Seattle, Washington 98195, USA}

\email{zhang@math.washington.edu}

\begin{abstract}
Let $\Phi$ be a root system and let $\Phi(\Zp)$ be 
the standard Chevalley $\Zp$-Lie algebra associated to $\Phi$. For any integer $t\geq 1$,
let $G$ be the uniform pro-$p$ group corresponding to the powerful 
Lie algebra $p^t \Phi(\Zp)$ and suppose that $p\geq 5$. Then the Iwasawa algebra $\Omega_G$ 
has no nontrivial two-sided reflexive ideals. This was previously proved by the authors for the root system $A_1$. 
\end{abstract}

\subjclass[2000]{16L30, 16P40}

\keywords{Iwasawa algebra, Chevalley type, reflexive 
ideal}

\maketitle

\setcounter{section}{-1}
\section{Introduction}
\label{xxsec0}

\subsection{Prime ideals in Iwasawa algebras}
\label{xxsec0.1} 
One of the main projects in the study of noncommutative 
Iwasawa algebras aims to understand the structure of two-sided
ideals in Iwasawa algebras $\Lambda_G$ and $\Omega_G$ for 
compact $p$-adic analytic groups $G$. A list of open questions 
in this project was posted in a survey paper by the first author 
and Brown \cite{AB}. Motivated by its connection to the Iwasawa 
theory of elliptic curves in arithmetic geometry it is particularly 
interesting to understand the prime ideals of $\Lambda_G$ 
when $G$ is an open subgroup of $\GL_2(\Zp)$. A reduction \cite{A} shows that this amounts to understanding the prime ideals of 
$\Omega_G$ when $G$ is an open subgroup of $\SL_2(\Zp)$. In a recent paper we introduced some machinery which allowed us to determine every prime ideal of $\Omega_G$ for any open 
torsionfree subgroup $G$ of $\SL_2(\Zp)$, see \cite[Theorem C]{AWZ}. In this paper the theory developed in \cite{AWZ} will be 
used to prove that under mild conditions on $p$, there are no non-zero reflexive ideals in 
$\Omega_G$ when $G$ is a uniform pro-$p$ group of Chevalley type. It follows from this that every two-sided reflexive 
ideal of $\Lambda_{G\times \Zp}$ is principal and centrally generated --- see \cite[Theorem 4.7]{A}.
\subsection{Definitions}
\label{xxsec0.2}
Throughout let $p$ be a fixed prime number. Let $\mathbb{Z}_p$ 
be the ring of $p$-adic integers and let ${\mathbb F}_p$ be the 
field ${\mathbb Z}/(p)$. Let $G$ be a compact $p$-adic analytic 
group. The \emph{Iwasawa algebra} of $G$ over ${\mathbb Z}_p$ 
(or the \emph{completed group algebra} of $G$ over ${\mathbb Z}_p$) 
is defined to be 
\[\Lambda_G : =\lim_{\longleftarrow} \mathbb{Z}_p[G/N],\]
where the inverse limit is taken over the open normal subgroups 
$N$ of $G$ \cite[p.443]{L}, \cite[p.155]{DDMS}. In this 
paper we use $R[G]$ for the group ring of $G$ over a ring
$R$. For any field $K$ of characteristic $p$, the 
\emph{Iwasawa algebra} of $G$ over $K$ (or the \emph{completed 
group algebra} of $G$ over $K$) is defined 
to be 
\[KG : =\lim_{\longleftarrow} K[G/N],\]
where the inverse limit is taken over the open normal subgroups 
$N$ of $G$. If $K=\Fp$, we write $\Omega_G$ for $KG$.

Let $A$ be any algebra and $I$ be a left ideal of $A$. We call $I$ 
is {\it reflexive} if the canonical map 
$$I\to \Hom_{A}(\Hom_{A}(I,A),A)$$
is an isomorphism. A reflexive right ideal is defined similarly. 
We will call a two-sided ideal $I$ of $A$ \emph{reflexive} if it 
is reflexive as a right and as a left ideal.

\subsection{Main results}
\label{xxsec0.3}
Let $\Phi$ be a root system, so that the
Dynkin diagram of any indecomposable component of $\Phi$ belongs to
$$\{A_n (n\geq 1), B_n (n\geq 2), C_n (n\geq 3), 
D_n (n\geq 4), E_6, E_7, E_8, F_4, G_2\}.$$ 
Let $\Phi(\Zp)$ denote the $\Zp$-Lie 
algebra constructed by using a Chevalley basis associated 
to $\Phi$. For any integer $t \geq 1$ (or $t\geq 2$ if $p=2$), the $\Zp$-Lie algebra $p^t \Phi(\Zp)$ is \emph{powerful}. By \cite[Theorem 9.10]{DDMS}
there is an isomorphism between the category of uniform
pro-$p$ groups and the category of powerful Lie algebras. The uniform pro-$p$ group corresponding to $p^t \Phi(\Zp)$
is called {\it of type $\Phi$}, or in general {\it of Chevalley type} 
without mentioning $\Phi$. In this case the Iwasawa algebra $\Omega_G$ 
is called {\it of type $\Phi$} (or {\it of Chevalley type} in general).

We say that $p$ is a \emph{nice prime} for $\Phi$ if $p\geq 5$ 
and if $p \nmid n+1$ when $\Phi$ has an indecomposable component of type $A_n$. Here
is our main theorem. 

\begin{thmA}
Let $\Phi$ be a root system and let $G$ be a uniform pro-$p$ group of type $\Phi$. If $p$ is a nice prime for 
$\Phi$, then the Frobenius pair $(\Omega_G, \Omega_{G^p})$ satisfies the derivation hypothesis. 
\end{thmA}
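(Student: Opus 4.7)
The plan is to verify the derivation hypothesis by passing to an associated graded picture for the $\fm$-adic filtration on $\Omega_G$ and reducing to classical structural results about derivations of the Chevalley Lie algebra in characteristic $p$. Because $G$ is uniform of Chevalley type, one has $\gr \Omega_G \cong \Sym_{\Fp}(\bar L)$, where $\bar L := L/pL \cong \Phi(\Fp)$ and $L = p^t\Phi(\Zp)$, and the group commutator on $G$ induces on $\gr \Omega_G$ the Kostant--Kirillov Poisson bracket determined by the Lie bracket on $\bar L$. The inclusion $\Omega_{G^p}\subseteq \Omega_G$ respects the filtration, and $\gr \Omega_{G^p}$ sits inside $\gr \Omega_G$ as the graded Frobenius subring generated by the $p$-th powers of $\bar L$. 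This yields a graded Frobenius pair on which the derivation hypothesis can be tested; the filtered lifting machinery developed in \cite{AWZ} should then transport a positive answer at the graded level back up to $(\Omega_G,\Omega_{G^p})$.

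The core calculation is therefore to analyze the Poisson derivations of $S := \Sym_{\Fp}(\bar L)$ which are linear over the Frobenius subalgebra $S^{(p)} := \Sym_{\Fp}(\bar L^{(p)})$. Any such derivation $\delta$ is determined by its restriction $\delta|_{\bar L} \colon \bar L \to S$, and this restriction is a $1$-cocycle for $\bar L$ acting on $S$ via the Poisson-induced adjoint action. One then wants to show that every such cocycle is a coboundary, i.e.\ comes from a Hamiltonian element $f\in S$ acting by $\{f,-\}$. Using the $\bar L$-module decomposition of $S$ into its Sym-weight components and invoking the computation of $H^1(\bar L,V)$ for rational $\bar L$-modules $V$ in the good characteristic regime, this should reduce to the classical vanishing $H^1(\bar L,\bar L) = 0$. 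Under the nice prime hypothesis, this identity is well known: $\Phi(\Fp)$ is simple (its centre being trivial because $p\nmid n+1$ when there is an $A_n$ component), and by work of Seligman and Block every derivation of a classical simple Lie algebra over $\Fp$ with $p\geq 5$ is inner.

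The main obstacle is twofold. First, one must reconcile the precise formulation of the derivation hypothesis from \cite{AWZ} with the filtered-to-graded reduction sketched above: the Frobenius structure on $\Omega_G$ relative to $\Omega_{G^p}$ in the completed Iwasawa setting interacts with the $\fm$-adic filtration in a subtle way, and one has to verify that an obstruction living at the graded Poisson level is the \emph{only} obstruction. Second, and more substantively, the nice prime hypothesis enters precisely at the cohomological step. When $\Phi$ has an $A_n$ component and $p\mid n+1$, the Chevalley algebra $\mathfrak{sl}_{n+1}(\Fp)$ has a nontrivial centre and admits an outer derivation (the trace-like functional extended via a weight), so the cocycle analysis fails and $H^1(\bar L,\bar L)\neq 0$. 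Pinpointing this obstruction as the only place the hypothesis $p\nmid n+1$ is needed, and handling the remaining indecomposable types uniformly by a root-system case analysis, will be the technical heart of the argument.
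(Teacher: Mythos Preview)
Your proposal rests on a misreading of what the derivation hypothesis actually demands, and the cohomological route you sketch does not lead there. The hypothesis is not a structural statement about the derivations of $S=\Sym_{\Fp}(\bar L)$; it asserts that for homogeneous $X,Y\in B=\gr\Omega_G$, if $\{a_r,Y\}_{\theta(a_r)}\in XB$ for every source of derivations $\mathbf{a}$ and all $r\gg 0$, then $d(Y)\in XB$ for \emph{every} $d\in\mathcal{D}=\Der_{B_1}(B)\cong B\otimes\mathfrak{g}^\ast$. The derivations supplied by sources are, after the identification of Proposition~\ref{xxsec3.2}, precisely the Frobenius twists $\ad(x)^{[p^r]}$ for $x\in\mathfrak{g}$ and $r$ large. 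The task is therefore to pass from knowing $\ad(x)^{[p^r]}(Y)\in XB$ for all $x$ and all large $r$ to knowing $f(Y)\in XB$ for every $f\in\mathfrak{g}^\ast$. Nothing in this problem is a cocycle condition: a general $B_1$-linear derivation of $B$ has no reason to interact with the Poisson bracket, so its restriction to $\bar L$ is \emph{not} a $1$-cocycle, and in fact a nonzero $f\in\mathfrak{g}^\ast\subset\mathcal{D}$ is never Hamiltonian (it maps $\bar L$ to $\Fp$, while $\{h,-\}$ never lowers degree). Thus the reduction to $H^1(\bar L,\bar L)=0$ is not available.

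The paper's argument is of an entirely different character. Writing $\ad(x)=\sum_i w_i\cdot f_i$ with $w_i\in[x,\mathfrak{g}]$ and $f_i\in\mathfrak{g}^\ast$, the twists $\ad(x)^{[p^r]}=\sum_i w_i^{p^r}\cdot f_i$ for $r=s,\ldots,s+m-1$ are packaged as $A\cdot\mathbf{f}=\mathbf{e}$ with $A$ a Vandermonde-type matrix in the $w_i^{p^s}$. Sections~\ref{xxsec1.1}--\ref{xxsec1.2} show that the adjugate of $A$ is divisible row-by-row by products of linear forms, so one can solve for each $f_j$ up to a factor $\Delta_j=\prod_{l\in\mathbb{P}([x,\mathfrak{g}])\setminus\mathbb{P}(W_j)}v_l^{p^s}$; this yields $\Delta_j\cdot f_j(Y)\in XB$. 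The decisive trick (Lemma~\ref{xxsec1.7}) is to choose $f_m=\varphi^\ast(\beta(y))=-x\cdot\beta(y)$, where $\beta:\mathfrak{g}\to\mathfrak{g}^\ast$ comes from a $\mathfrak{g}$-invariant form: then swapping $x$ and $y$ gives a second relation with a \emph{coprime} polynomial prefactor, because $[x,\mathfrak{g}]\subseteq\ker\beta(x)$, and one concludes $(x\cdot\beta(y))(Y)\in XB$ outright. The nice-prime hypothesis enters not through $H^1$ but through Theorem~\ref{xxsec3.4}: it guarantees that $\Phi(\Fp)$ is simple and that the normalized Killing form of Gross--Nebe is nondegenerate, so that the elements $x\cdot\beta(y)$ span all of $\mathfrak{g}^\ast$ (condition $(L1)$). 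Your outline never produces the coprimality mechanism needed to strip the polynomial prefactors, and the cohomological input you invoke, while true under the same hypotheses, plays no role.
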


The undefined technical terms in Theorem A will be explained in Section 2. The following corollary was proved in \cite{AWZ}, assuming Theorem A. This paper fills in the missing step.

\begin{cor}\cite[Theorems A and B]{AWZ} Let $G$ be a torsionfree compact $p$-adic
analytic group whose $\Qp$-Lie algebra $\mathcal{L}(G)$ is split semisimple over $\Qp$. Suppose that $p$
is a nice prime for the root system $\Phi$ of $\mathcal{L}(G)$. Then
$\Omega_G$ has no non-trivial two-sided reflexive ideals. In particular, every non-zero normal element of $\Omega_G$ is a unit.
\end{cor}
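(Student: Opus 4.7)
The plan is to deduce the Corollary by combining Theorem A above with the reduction machinery already developed in \cite{AWZ}.

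First I would reduce to the case that $G$ itself is a uniform pro-$p$ group of type $\Phi$. Pick an open normal uniform pro-$p$ subgroup $H$ of $G$; since $\mathcal{L}(G)$ is split semisimple with root system $\Phi$, the powerful $\Zp$-Lie algebra associated to $H$ is a $\Zp$-form of $\mathcal{L}(G)$. After replacing $H$ by $H^{p^s}$ for $s$ sufficiently large, we may arrange that this Lie algebra is of the form $p^t\Phi(\Zp)$ for some $t\ge 1$, so that $H$ becomes a uniform pro-$p$ group of type $\Phi$ in the sense introduced above. Because $G/H$ is finite, $\Omega_G$ is a crossed product $\Omega_H \ast (G/H)$ and in particular a finitely generated free $\Omega_H$-module on both sides; a non-zero two-sided reflexive ideal of $\Omega_G$ would descend, by restriction of scalars and the standard transfer for crossed products by finite groups, to a non-zero two-sided reflexive ideal of $\Omega_H$, so it is enough to establish the Corollary for the uniform pro-$p$ Chevalley group $H$.

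Next I would invoke the main results of \cite{AWZ}: Theorems A and B there assert that once the derivation hypothesis for the Frobenius pair $(\Omega_H,\Omega_{H^p})$ is established, $\Omega_H$ has no non-zero two-sided reflexive ideals provided that $p$ is a nice prime for $\Phi$. Theorem A of the present paper verifies precisely this derivation hypothesis under the stated assumptions on $p$ and $\Phi$, so combining these yields the first assertion of the Corollary.

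For the ``in particular'' clause: if $x\in\Omega_G$ is a non-zero normal element then $x\Omega_G=\Omega_G x$ is a principal two-sided ideal of $\Omega_G$. Because $G$ is torsionfree, $\Omega_G$ is a Noetherian Auslander-regular domain, so every non-zero principal two-sided ideal in it is automatically reflexive; the first part of the Corollary then forces $x\Omega_G=\Omega_G$, whence $x$ is a unit. The substantive obstacle is plainly Theorem A itself, which is the business of the rest of the paper; once Theorem A is in hand, the steps above are either formal or have been packaged into \cite{AWZ}, the subtlest point being the verification that the crossed-product reduction from $G$ down to $H$ really preserves the non-existence of non-zero two-sided reflexive ideals.
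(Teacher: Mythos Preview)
Your proposal is correct and matches the paper's approach: the paper gives no independent proof of this Corollary at all, merely remarking that it ``was proved in \cite{AWZ}, assuming Theorem A'' and that the present paper supplies Theorem A. Your sketch is thus more detailed than what the paper actually writes; the reduction from $G$ to a uniform Chevalley-type subgroup and the passage back up are already packaged inside \cite[Theorems A and B]{AWZ}, so you need not reproduce them here. Two small cautions about the extra details you supply: the assertion that $H^{p^s}$ has Lie algebra exactly $p^t\Phi(\Zp)$ for some $t$ is not automatic for an arbitrary $\Zp$-lattice in $\Phi(\Qp)$, and the claim that a reflexive two-sided ideal of $\Omega_G$ restricts to one of $\Omega_H$ under the crossed product is subtler than a one-line transfer---but both issues are handled in \cite{AWZ} and are not your burden here.
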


It was asked in \cite[Question J]{AB} whether an Iwasawa 
algebra $\Omega_G$ of Chevalley type has any non-zero, non-maximal prime ideals. 
Theorem A says that it has no prime ideals of so-called 
homological height one and hence provides evidence for a 
negative answer. Combining this with a result of the first author
gives a complete answer to \cite[Question J]{AB} in the case when $\Phi = A_1$.

We conjecture that the hypothesis of $p$ being nice is 
superfluous. When $\Phi=A_1$ we gave a separate proof for $p=2$ in
\cite[Section 8]{AWZ} (see also Section 4), which shows the 
difficulty of dealing with non-nice primes. 

\subsection{An outline of the paper} In Section 1 we will give a treatment of some elementary material (linear algebra, derivations, Lie algebras) that will form an essential part of the proof of our main result. The reader may wish to skip this material on his first reading and return to it later as needed. Section 2 contains the definitions
of some key terms such as derivation hypothesis and Frobenius pair. The proof of Theorem A is given in Section 3.
Section 4 contains some remarks about the case when $\Phi=A_n$ and $p$ is not a nice prime.

\section{Preparatory results}
\label{xxsec1}

\subsection{A Vandermonde-type determinant}
\label{xxsec1.1}
Let $\{w_1,\ldots,w_m\}$ be a basis for an $m$-dimensional
$\Fp$-vector space $W$. Consider the symmetric algebra
\[B := \Sym(W) \cong \Fp[w_1,\ldots,w_m].\]
We are interested in the following matrix of Vandermonde type:
$$M(w_1,\ldots,w_m;d_1,\ldots,d_m):=
\begin{pmatrix} 
w_1^{p^{d_1}} & w_2^{p^{d_1}} & \cdots & w_m^{p^{d_1}} \\ 
w_1^{p^{d_2}} & w_2^{p^{d_2}} & \cdots & w_m^{p^{d_2}} \\ 
\vdots & \vdots & \cdots& \vdots \\ 
w_1^{p^{d_m}} & w_2^{p^{d_m}} & \cdots & w_m^{p^{d_m}}
\end{pmatrix}$$
where $\{d_1,\ldots,d_m\}$ is a sequence of non-negative 
integers. For simplicity we write
$$M(w_1,\ldots,w_m) := \begin{pmatrix} w_1 & w_2 & \cdots
& w_m \\ w_1^p & w_2^p & \cdots & w_m^p \\ \vdots & \vdots
& \cdots& \vdots \\ w_1^{p^{m-1}} & w_2^{p^{m-1}} & \cdots
& w_m^{p^{m-1}}\end{pmatrix}.$$

Let $\mathbb{P}(W)$ be the set of all one-dimensional subspaces of
$W$. For each $l \in \mathbb{P}(W)$ we fix a choice of generator
$w_l \in l$ so that $l = \langle w_l\rangle$. Define
$\Delta(W)$ to be the product $\prod_{l \in \mathbb{P}(W)} w_l$.
 
\begin{lem} 
\begin{enumerate}
\item
Let $\{d_1,\cdots,d_m\}$ be a sequence of non-negative integers. 
Then $\Delta(W)$ divides $\det M(w_1,\ldots,w_m;d_1,\ldots,d_m)$. 
\item
There exists $\lambda \in \Fp^{\times}$ such that
$\det M(w_1,\ldots,w_m) = \lambda \cdot \Delta(W)$.
\end{enumerate}
\end{lem}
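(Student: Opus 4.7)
The plan is to exploit the fact that the Frobenius map $v \mapsto v^{p^d}$ is $\Fp$-linear on $B$, so that for any $v_1, \ldots, v_m \in W$ the determinant $\det M(v_1, \ldots, v_m; d_1, \ldots, d_m)$ is an alternating $\Fp$-multilinear function of the columns. Indeed, if $v = \sum c_j w_j$ with $c_j \in \Fp$, then $v^{p^{d_i}} = \sum c_j^{p^{d_i}} w_j^{p^{d_i}} = \sum c_j w_j^{p^{d_i}}$, since $c_j \in \Fp$ is fixed by Frobenius; so the column corresponding to $v$ is the $\Fp$-linear combination $\sum c_j \cdot (\text{column } j)$.

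For part (1), fix a line $l \in \mathbb{P}(W)$ with generator $w_l$ and extend $w_l$ to an $\Fp$-basis $v_1 = w_l, v_2, \ldots, v_m$ of $W$. Writing each $w_j$ as an $\Fp$-linear combination of the $v_i$, the multilinear-alternating property gives
\[
\det M(w_1, \ldots, w_m; d_1, \ldots, d_m) = \det(A) \cdot \det M(v_1, \ldots, v_m; d_1, \ldots, d_m)
\]
for some matrix $A \in \GL_m(\Fp)$. The first column of $M(v_1, \ldots, v_m; d_1, \ldots, d_m)$ is $(w_l^{p^{d_1}}, \ldots, w_l^{p^{d_m}})^T$, every entry of which is divisible by $w_l$ in $B$. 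Hence $w_l$ divides the determinant. Since $B = \Fp[w_1, \ldots, w_m]$ is a UFD and the elements $\{w_l : l \in \mathbb{P}(W)\}$ are pairwise non-associate irreducibles (they span distinct lines), their product $\Delta(W)$ divides $\det M(w_1, \ldots, w_m; d_1, \ldots, d_m)$.

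For part (2), by (1) we have $\det M(w_1, \ldots, w_m) = f \cdot \Delta(W)$ for some $f \in B$. A degree count settles the result: every term $\prod_i w_{\sigma(i)}^{p^{i-1}}$ in the Leibniz expansion has total degree $1 + p + \cdots + p^{m-1} = (p^m-1)/(p-1)$, and $\Delta(W)$ has the same degree since $|\mathbb{P}(W)| = (p^m-1)/(p-1)$ and each $w_l$ has degree one. So $f$ is a constant. To see $f \neq 0$, note that each permutation $\sigma$ contributes a distinct monomial: the exponent of $w_j$ in the term for $\sigma$ is $p^{\sigma^{-1}(j)-1}$, and different $\sigma$'s give different exponent tuples by uniqueness of base-$p$ representation. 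Hence no cancellation occurs and $\det M(w_1, \ldots, w_m) \neq 0$, giving $\lambda := f \in \Fp^\times$. The only minor subtlety in the whole argument is the change-of-basis step in (1); once one recognises that Frobenius is $\Fp$-linear, everything reduces to standard Vandermonde-style reasoning.
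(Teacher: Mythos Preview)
Your proof is correct and follows essentially the same strategy as the paper: both arguments exploit the $\Fp$-linearity of Frobenius to show that each $w_l$ divides the determinant (you via a change of basis, the paper via reduction modulo $w_l$), and both conclude part~(2) by the degree count $1+p+\cdots+p^{m-1}=|\mathbb{P}(W)|$. Your explicit no-cancellation argument for nonvanishing is a nice addition over the paper's appeal to the ``well-known'' fact, but otherwise the approaches coincide.
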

\begin{proof}
(1) Let $w$ be a non-zero element of $W$; we will show that
\[\det M\in wB.\]
where $M=M(w_1,\ldots,w_m;d_1,\ldots,d_m)$.
Write $w = a_1w_1 + \ldots + a_mw_m$ for some $a_i \in \Fp$, not all
zero. Without loss of generality $a_1 = -1$. Consider the canonical
ring homomorphism $\pi : \Sym(W) \to \Sym(W / \langle w\rangle)$;
this has kernel exactly $wB$. Let $u_i = \pi(w_i)$; then
\[\pi(\det M) = \det \begin{pmatrix} 
a_2u_2^{p^{d_1}}+ \cdots + a_mu_m^{p^{d_1}} 
& u_2^{p^{d_1}} & \cdots & u_m^{p^{d_1}} \\ 
a_2u_2^{p^{d_2}} + \cdots +a_mu_m^{p^{d_2}} 
& u_2^{p^{d_2}} & \cdots & u_m^{p^{d_2}}\\ 
\vdots & \vdots & \cdots& \vdots \\ 
a_2u_2^{p^{d_m}}+\cdots+a_mu_m^{p^{d_m}}
& u_2^{p^{d_m}} & \cdots & u_m^{p^{d_m}}
\end{pmatrix}\] 
which is zero because the first column is a linear combination of 
the others. Hence $\det M \in wB$ as claimed.

Now if $l\neq l'$ are two distinct lines then $w_l$ and $w_{l'}$ are
coprime in $B$. Hence $\Delta(W)=\prod_{l \in \mathbb{P}(W)} w_l$ divides
$\det M$. 

It is well known that $\det M$ is non-zero if and only if 
$\{d_1,\ldots,d_m\}$ are distinct.

(2) Both expressions are polynomials of degree precisely
\[1 + p + p^2 + \cdots + p^{m-1} = |\mathbb{P}(W)| = \frac{p^m-1}{p-1}\]
and the result follows.
\end{proof}

\subsection{The adjugate matrix}
\label{xxsec1.2} 
Later on we will be interested in coming as close as
possible to inverting the matrix $M(w_1,\ldots,w_m)$. Recall
\emph{Cramer's rule}: this says that if $A$ is any square $m\times
m$ matrix then
\[\adj(A) \cdot A = A \cdot \adj(A) = \det(A)\cdot I_m\]
where $I_m$ is the identity matrix and $\adj(A)$ is the
\emph{adjugate matrix}, defined as follows:
\[\adj(A)_{ij} = (-1)^{i+j} \det C_{ji}\]
where $C_{ij}$ is the matrix $A$ with the $i^{\rm{th}}$ row and
$j^{\rm{th}}$ column removed. 

We will use the following standard piece of notation. Given a list $(x_1,\ldots,x_n)$ consisting of $n$ elements, $(x_1,\ldots,\widehat{x_j},\ldots,x_n)$ denotes the list consisting of $n-1$ elements, where $x_j$ has been omitted. Thus $M(w_1,\ldots,\widehat{w_j},\ldots,w_m)$ is equal to the $(m-1)\times
(m-1)$-matrix defined in $\S \ref{xxsec1.1}$ with 
$\{d_1,\cdots,d_{m-1}\}=\{0,1,\cdots,m-2\}$. Lemma
\ref{xxsec1.1} implies the following

\begin{prop} 
Retain the notation of $\S \ref{xxsec1.1}$ and let
$A = M(w_1,\ldots,w_m)$. Then for any $j = 1,\ldots,m$,
$\det M(w_1,\ldots,\widehat{w_j},\ldots,w_m)$ divides each
entry in the $j^{\rm{th}}$ row of $\adj(A)$ in $B$.
\end{prop}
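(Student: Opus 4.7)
The plan is to unravel the definition of the adjugate and then show that each minor we encounter is itself a Vandermonde-type determinant of the form treated in Lemma \ref{xxsec1.1}, to which we apply both parts of that lemma.

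First I would write out the entries of the $j^{\rm{th}}$ row of $\adj(A)$ explicitly: the $(j,i)$-entry is $(-1)^{i+j}\det C_{ij}$, where $C_{ij}$ is obtained from $A=M(w_1,\ldots,w_m)$ by deleting the $i^{\rm{th}}$ row and the $j^{\rm{th}}$ column. Since the columns of $A$ are indexed by $w_1,\ldots,w_m$ and the rows by the exponents $p^0,p^1,\ldots,p^{m-1}$, the submatrix $C_{ij}$ is nothing other than
\[ M\bigl(w_1,\ldots,\widehat{w_j},\ldots,w_m;\, d_1,\ldots,d_{m-1}\bigr),\]
where $(d_1,\ldots,d_{m-1})$ is the sequence $(0,1,\ldots,m-1)$ with the entry $i-1$ removed.

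Next I would apply Lemma \ref{xxsec1.1}(1), but to the $(m-1)$-dimensional subspace $W' := \langle w_1,\ldots,\widehat{w_j},\ldots,w_m\rangle$ of $W$ with basis $\{w_k : k\neq j\}$. The lemma, applied inside the subring $\Sym(W')\subseteq B$, yields that $\Delta(W')$ divides $\det C_{ij}$. Then by Lemma \ref{xxsec1.1}(2), applied to the same basis of $W'$ with the standard exponents, $\det M(w_1,\ldots,\widehat{w_j},\ldots,w_m)$ equals $\lambda\cdot\Delta(W')$ for some $\lambda\in\Fp^{\times}$; in particular it is an associate of $\Delta(W')$ in $\Sym(W')$.

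Combining these, $\det M(w_1,\ldots,\widehat{w_j},\ldots,w_m)$ divides $\det C_{ij}$ inside $\Sym(W')$, and hence inside $B=\Sym(W)$ since $\Sym(W')\hookrightarrow \Sym(W)$ is a subring. Multiplying by $(-1)^{i+j}$ does not affect divisibility, so $\det M(w_1,\ldots,\widehat{w_j},\ldots,w_m)$ divides $\adj(A)_{ji}$ for every $i$. This is the claim. There is no real obstacle here: the only point that needs a line of care is the compatibility between applying part (1) of the lemma in $\Sym(W')$ and then transferring the divisibility to $B$, which is immediate from the inclusion of symmetric algebras.
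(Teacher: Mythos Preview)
Your proof is correct and follows essentially the same route as the paper's: both identify $C_{ij}$ with $M(w_1,\ldots,\widehat{w_j},\ldots,w_m;0,\ldots,\widehat{i-1},\ldots,m-1)$ and then invoke Lemma~\ref{xxsec1.1}. You are simply more explicit than the paper in spelling out that part~(2) of the lemma is needed to pass from $\Delta(W')$ to $\det M(w_1,\ldots,\widehat{w_j},\ldots,w_m)$, and in noting that the divisibility first takes place in $\Sym(W')$ before being transported to $B$.
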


\begin{proof}
We need to show that for all $i = 1,\ldots,m$,
\[\det M(w_1,\ldots,\widehat{w_j},\ldots,w_m) | \det C_{ij}.\]
But $C_{ij}=M(w_1,\ldots,\widehat{w_j},\ldots,w_m;
0,\ldots, \widehat{i-1},\ldots, m-1)$. The assertion follows from
Lemma \ref{xxsec1.1}(1).
\end{proof}

For each $j=1,\ldots,m$, let $W_j$ be the subspace $\langle
w_1,\ldots,\widehat{w_j},\ldots,w_m\rangle$ of $W$. Define
\[\Delta_j := \prod_{l \in \mathbb{P}(W) \setminus \mathbb{P}(W_j)}w_l \in B.\]
By Lemma \ref{xxsec1.1}(2), we see that for some $\lambda_j \in
\Fp^\times$,
\[ \Delta_j = \lambda_j \cdot \frac{\det M(w_1,\ldots,w_m)}
{\det M(w_1,\ldots,\widehat{w_j},\ldots,w_m)}. \]

\begin{cor} 
Let $D$ be the diagonal $m\times m$ matrix defined by
$D_{ij} = \delta_{ij}\Delta_j$. Then there exists $U \in M_m(B)$ 
such that $U \cdot A = D$.
\end{cor}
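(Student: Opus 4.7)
The plan is to construct $U$ directly from the adjugate of $A$, suitably rescaled row by row. The starting point is Cramer's rule, $\adj(A) \cdot A = \det(A) \cdot I_m$. If we worked over the fraction field of $B$, we could simply take $U = \det(A)^{-1} D \cdot \adj(A)$; the content of the corollary is that, thanks to the factorization $\Delta_j = \lambda_j \det(A)/d_j$ recorded immediately before the corollary's statement (where $d_j := \det M(w_1,\ldots,\widehat{w_j},\ldots,w_m)$), this matrix in fact lies in $M_m(B)$.

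Concretely, I would define
\[ U_{ji} := \lambda_j \cdot \frac{\adj(A)_{ji}}{d_j} \qquad (1 \le i, j \le m), \]
interpreted a priori in $\Frac(B)$. The previous proposition tells us that $d_j$ divides every entry in the $j$-th row of $\adj(A)$ inside $B$; since $B = \Sym(W)$ is an integral domain, each quotient is an unambiguous element of $B$, and hence $U \in M_m(B)$.

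It then only remains to verify $U \cdot A = D$. Since the $j$-th row of $U$ is $\lambda_j/d_j$ times the $j$-th row of $\adj(A)$, Cramer's rule gives
\[ (UA)_{jk} = \frac{\lambda_j}{d_j} \cdot (\adj(A)\cdot A)_{jk} = \frac{\lambda_j}{d_j} \cdot \det(A) \cdot \delta_{jk} = \Delta_j \delta_{jk} = D_{jk}, \]
as required. The substantive obstacle to this argument has already been removed in the previous proposition, whose proof in turn rests on the divisibility half of the Vandermonde-type lemma; granted that divisibility together with the rescaling constant $\lambda_j$ from the second part of the lemma, the corollary reduces to packaging a rescaled adjugate and applying Cramer's rule.
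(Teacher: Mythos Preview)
Your proof is correct and is essentially the same as the paper's: both divide the $j$-th row of $\adj(A)$ by $d_j = \det M(w_1,\ldots,\widehat{w_j},\ldots,w_m)$ (up to the unit $\lambda_j$) using the preceding proposition, and then apply Cramer's rule. The only cosmetic difference is that the paper packages this row-rescaling via an auxiliary diagonal matrix $E$ with $E_{jj} = d_j/\lambda_j$ and writes $E\cdot U = \adj(A)$, whereas you write out the entries $U_{ji}$ explicitly.
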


\begin{proof} Let $E$ be the diagonal $m\times m$ matrix defined by 
\[\lambda_j E_{ij} = \delta_{ij} \det M(w_1,\ldots,\widehat{w_j},\ldots,w_m).\]
By the proposition, there exists $U \in M_m(B)$ such that $E\cdot U = \adj(A).$
Hence $U \cdot A = E^{-1} \cdot \adj(A) \cdot A = E^{-1} \det A = D$, as required.
\end{proof}

\subsection{Derivations} 
\label{xxsec1.3}
Now let $V$ be a finite dimensional vector space over a field $K$ 
(soon we will assume that $K=\Fp$). Consider the set $\mathfrak{D}$ 
of all derivations of $B := \Sym_K(V)$. Note that any $f \in V^\ast
:=\Hom_K(V,K)$ in the dual space of $V$ gives rise to a derivation, 
which we again denote by $f$, defined by the rule
\[f(v_1\cdots v_k) = \sum_{j=1}^k v_1\cdots f(v_j) \cdots v_k\]
for all $v_1,\ldots,v_k\in V$. Next, $\mathfrak{D}$ is naturally a
left $B$-module, with the action\ given by
\[(b\cdot d)(x) = bd(x)\]
for all $b,x\in B$ and $d\in \mathfrak{D}$. This gives us a
$K$-linear map $\psi : B\otimes V^{\ast} \to \mathfrak{D}$, defined
by $\psi(b\otimes f) = b\cdot f$. The following Lemma is well-known.

\begin{lem}
Let $\psi$ be defined as above. Then $\psi$ is a $B$-module 
isomorphism.
\end{lem}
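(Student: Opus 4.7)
The plan is to reduce to a basis computation. Pick a basis $v_1,\ldots,v_n$ of $V$ with dual basis $f_1,\ldots,f_n$ of $V^\ast$, and let $\partial_i := \psi(1\otimes f_i)$, which is the usual partial derivative $\partial/\partial v_i$ on $B = \Sym_K(V) \cong K[v_1,\ldots,v_n]$. Since $\{1 \otimes f_i\}_{i=1}^n$ is a $B$-basis of $B \otimes V^\ast$, it suffices to show that $\{\partial_1,\ldots,\partial_n\}$ is a $B$-basis of $\mathfrak{D}$.

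For surjectivity, I would exploit the fact that any derivation $d\in \mathfrak{D}$ is determined by its values on the algebra generators $v_1,\ldots,v_n$. Set $b_i := d(v_i) \in B$, and form $d' := d - \sum_{i=1}^n b_i \cdot \partial_i$. Then $d'$ is a $K$-derivation of $B$ with $d'(v_i) = 0$ for every $i$; since the $v_i$ generate $B$ as a $K$-algebra and derivations are determined by their action on a generating set, $d' = 0$. Hence $d = \psi\bigl(\sum_i b_i \otimes f_i\bigr)$, so $\psi$ is surjective.

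For injectivity, suppose $\psi\bigl(\sum_i b_i \otimes f_i\bigr) = 0$, i.e.\ $\sum_i b_i \partial_i = 0$ as a derivation of $B$. Applying this derivation to $v_j$ yields $b_j = 0$ for each $j$, so $\sum_i b_i \otimes f_i = 0$ in $B \otimes V^\ast$. Both maps in sight are plainly $B$-linear (the $B$-action on $\mathfrak{D}$ defined in the text matches the $B$-action on $B \otimes V^\ast$ under $\psi$), so this completes the proof.

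There is no real obstacle here: the only subtlety is recording that a $K$-derivation of a polynomial algebra is determined by its values on the polynomial generators, which is immediate from the Leibniz rule and induction on total degree. Everything else is formal.
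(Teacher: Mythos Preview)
Your proof is correct and is the standard argument; the paper itself does not supply a proof, merely recording the lemma as well-known.
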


Now we assume that $K=\Fp$. Then $x \mapsto x^{p^r}$ 
is an $\Fp$-linear endomorphism of $B$. Hence it extends to an 
$\Fp$-linear endomorphism, denoted by $(-)^{[p^r]}$, of 
$B \otimes V^{\ast}$ that is determined by
\[(b\otimes f)^{[p^r]} = b^{p^r}\otimes f.\]

\begin{defn} For any $d\in\mathfrak{D}$ and $r\geq 0$, let
$d^{[p^r]} = \psi(\psi^{-1}(d)^{[p^r]})$ be the
corresponding derivation.
\end{defn}

Thus $d^{[p^r]}$ is the derivation of $B$ determined by the rule
\[d^{[p^r]}(v) = d(v)^{p^r}\]
for all $v\in V$. We will henceforth identify $B\otimes V^{\ast}$ with $\mathfrak{D}$
using $\psi$. 

\subsection{A certain module of derivations}
\label{xxsec1.4}
The space $\End(V)$ can be canonically identified with $V \otimes
V^{\ast}$. Since $V$ is contained in $B$, we will identify 
$\End(V)$ with $V \otimes V^{\ast} \subseteq \mathfrak{D}$.

As in Section \ref{xxsec1.1}, for each $l \in \mathbb{P}(V)$ choose
some $v_l \in V$ such that $l = \langle v_l\rangle$. If $\varphi \in
\End(V)$ then $\varphi^\ast \in \End(V^\ast)$ is the dual map to $\varphi$ defined by
\[\varphi^\ast(g) = g\circ \varphi\]
for all $g\in V^\ast$.

\begin{prop} Let $\varphi \in \End(V)$ and $s \geq 0$ be given.
Consider the $B$-submodule
\[\mathcal{E}_s := \sum_{r\geq s} B \cdot \varphi^{[p^r]}\]
of $\mathfrak{D}$, and let $g \in V^\ast$. Then
\[\left(\prod_{l \in \mathbb{P}(\varphi(V))\setminus 
\mathbb{P}(\ker g)}
v_l^{p^s}\right) \cdot \varphi^{\ast}(g) \in \mathcal{E}_s.\]
\end{prop}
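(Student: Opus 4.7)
The plan is to apply the Corollary of \S\ref{xxsec1.2} to a basis of $W := \varphi(V)$ that is adapted to $\ker g$. First dispose of the trivial case: if $g|_W = 0$, then $\varphi^*(g) = g\circ \varphi = 0$ and the set $\mathbb{P}(W)\setminus \mathbb{P}(\ker g)$ is empty, so the product $P := \prod_{l \in \mathbb{P}(W)\setminus \mathbb{P}(\ker g)} v_l$ equals $1$ and the statement $0 \in \mathcal{E}_s$ is immediate. So I may assume $g|_W \neq 0$, in which case $H := \ker(g|_W)$ is a hyperplane in $W$ with $\mathbb{P}(\ker g)\cap \mathbb{P}(W) = \mathbb{P}(H)$.

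Next, I choose a basis $h_1,\ldots,h_{m-1}$ of $H$ and extend it to a basis $h_1,\ldots,h_m$ of $W$, rescaling $h_m$ so that $g(h_m) = 1$. Writing $\varphi = \sum_{j=1}^m h_j \otimes e_j$ with $e_j \in V^*$ (so $e_j(v)$ is the $h_j$-coordinate of $\varphi(v)$), the definition from \S\ref{xxsec1.3} gives $\varphi^{[p^r]} = \sum_j h_j^{p^r} \otimes e_j$, while $\varphi^*(g) = \sum_j g(h_j) e_j = e_m$. The point of this basis change is that when the Corollary of \S\ref{xxsec1.2} is run with $h_1,\ldots,h_m$ in place of $w_1,\ldots,w_m$, the relevant subspace $W_m = \langle h_1,\ldots,h_{m-1}\rangle$ is precisely $H$, so the diagonal entry $\Delta_m$ becomes $\prod_{l \in \mathbb{P}(W)\setminus \mathbb{P}(H)} v_l = P$.

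Applying the Corollary produces $U_{m1},\ldots,U_{mm} \in B$ satisfying
\[
\sum_{j=1}^m U_{mj}\, h_k^{p^{j-1}} \;=\; \delta_{km}\, P \qquad (1 \le k \le m).
\]
Raising to the $p^s$-th power, which is $\Fp$-linear on $B$ in characteristic $p$, gives $\sum_j U_{mj}^{p^s}\, h_k^{p^{s+j-1}} = \delta_{km} P^{p^s}$ for every $k$. Combining the derivations $\varphi^{[p^{s+j-1}]}$ with these coefficients,
\[
\sum_{j=1}^m U_{mj}^{p^s}\, \varphi^{[p^{s+j-1}]} \;=\; \sum_k \Bigl(\sum_j U_{mj}^{p^s} h_k^{p^{s+j-1}}\Bigr) e_k \;=\; P^{p^s} e_m \;=\; P^{p^s}\varphi^*(g),
\]
and since every exponent $s+j-1$ is at least $s$, the expression lies in $\mathcal{E}_s$, which is what the proposition asserts. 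The only non-routine step is the basis adaptation; once $\Delta_m$ is aligned with $P$, everything else reduces to Frobenius additivity together with the Corollary of \S\ref{xxsec1.2}.
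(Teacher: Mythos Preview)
Your proof is correct and follows essentially the same approach as the paper: choose a basis of $W=\varphi(V)$ adapted so that $W_m = W\cap\ker g$, express $\varphi$ accordingly, and apply Corollary~\ref{xxsec1.2} to recover $\Delta_m\cdot\varphi^\ast(g)\in\mathcal{E}_s$. The only cosmetic differences are that the paper builds the basis via a dual basis of $V^\ast$ containing $\varphi^\ast(g)$ and applies Corollary~\ref{xxsec1.2} directly to $W^{[p^s]}=\langle w_1^{p^s},\ldots,w_m^{p^s}\rangle$, whereas you pick the basis of $W$ directly and apply Frobenius afterwards; neither change affects the substance of the argument.
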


\begin{proof} Let $W = \varphi(V)$ and write $m = \dim W$ and 
$n = \dim V$. Consider the annihilator $(\ker\varphi)^{\perp}$ 
of $\ker\varphi$ in $V^\ast$. This clearly contains 
$\varphi^\ast(V^\ast)$ and is hence equal to it because both 
spaces have dimension $m$.

There is nothing to prove if $\varphi^*(g)=0$. Otherwise let 
$\{f_1,\ldots,f_m\}$ be a basis for $(\ker\varphi)^{\perp}$ such
that $f_m = \varphi^\ast(g)$, and extend it to a basis
$\{f_1,\ldots,f_n\}$ for $V^\ast$. Let $\{v_1,\ldots,v_n\}$ be the
dual basis for $V$, so that
\[f_i(v_j) = \delta_{ij}\quad {\text{for all $i,j$}}.\]
Then $\{v_{m+1},\ldots,v_n\}$ is a basis for $\ker \varphi$ and
$\{w_1,\ldots,w_m\}$ is a basis for $W = \varphi(V)$, where $w_i :=
\varphi(v_i)$ for $i=1,\ldots,m$.

Inside $\mathfrak{D}$ we have $\varphi = \sum_{i=1}^m w_i\cdot f_i$
by construction, so
\begin{equation}
\label{1.4.1}
\varphi^{[p^r]} = \sum_{i=1}^m w_i^{p^r}\cdot f_i
\tag{1.4.1}
\end{equation}
for all $r\geq 0$.

Consider the vector space $W^{[p^s]} = \langle
w_1^{p^s},\ldots,w_m^{p^s}\rangle$ and let $A =
M(w_1^{p^s},\ldots,w_m^{p^s})$ be the matrix appearing in
Section \ref{xxsec1.1}. Now $\mathfrak{D}$ is a left $B$-module, so
$\mathfrak{D}^m:=
\begin{pmatrix}\mathfrak{D}\\
\vdots\\\mathfrak{D}\end{pmatrix}$ is a left $M_m(B)$-module. 
Let $\mathbf{e} \in \mathfrak{D}^m$ be the column vector whose 
$r^{\rm{th}}$ entry is the
derivation $\varphi^{[p^{r + s - 1}]}$, and let $\mathbf{f} \in
\mathfrak{D}^m$ be the column vector whose $r^{\rm{th}}$ entry is the
derivation $f_r$, for each $r =1,\ldots,m$. Then we can rewrite the
equations \eqref{1.4.1} for $r = s,s+1,\ldots,s+m-1$ as
\[ A \cdot \mathbf{f} = \mathbf{e} \in \mathcal{E}_s^m\]
inside $\mathfrak{D}^m$. By Corollary \ref{xxsec1.2} we can find $U\in
M_m(B)$ such that $U \cdot A = D$ is a diagonal matrix whose
$j^{\rm{th}}$ entry is
\[\Delta_j = \prod_{l \in \mathbb{P}(W) \setminus 
\mathbb{P}(W_j)}v_l^{p^s}\]
Here $W_j = \langle w_1,\ldots,\widehat{w_j},\ldots,w_m \rangle$,
for all $j =1,\ldots,m$. Hence
\[D\cdot \mathbf{f} = U\cdot A\cdot \mathbf{f} 
= U\cdot \mathbf{e} \in \mathcal{E}_s^m,\]
so in particular $D_m \cdot \varphi^\ast(g) = D_m\cdot f_m \in
\mathcal{E}_s$. Now $g(w_i) = (g\circ\varphi)(v_i) = f_m(v_i) =
\delta_{mi}$ for all $i=1,\ldots, m$, so
\[W_m = W \cap \ker g.\]
The result follows.
\end{proof}

\subsection{$\mathfrak{g}$-modules}
\label{xxsec1.5}
Let $\mathfrak{g}$ be a finite dimensional Lie algebra a base 
field $k$. By a \emph{$\mathfrak{g}$-module} we mean a left
$U(\mathfrak{g})$-module $V$, where $U(\mathfrak{g})$ is the
universal enveloping algebra of $\mathfrak{g}$. To give $V$ the
structure of a $\mathfrak{g}$-module is the same thing as to 
give a Lie algebra homomorphism
\[\rho : \mathfrak{g} \to \mathfrak{gl}(V)\]
where $\mathfrak{gl}(V) = \End(V)$ is the Lie algebra of all 
linear endomorphisms of $V$ under the commutator bracket.

If $V$ is a $\mathfrak{g}$-module then so is the dual space
$V^{\ast}$, by the rule
\[(x \cdot f)(v) = -f(x \cdot v)\]
for all $x\in\mathfrak{g}$, $f\in V^\ast$ and $v\in V$. Note that
$\rho : \mathfrak{g} \to \mathfrak{gl}(V)$ and $\rho^* :
\mathfrak{g} \to \mathfrak{gl}(V^\ast)$ are the corresponding
representations then
\[\rho^\ast(x) = - \rho(x)^\ast\]
for all $x\in\mathfrak{g}$. Here as in Section \ref{xxsec1.4},
$\rho(x)^\ast$ denotes the dual map to $\rho(x) : V\to V$.

\subsection{Invariant bilinear forms}
\label{xxsec1.6}
Let $V$ be a $\mathfrak{g}$-module. Recall that a
\emph{$\mathfrak{g}$-invariant form} on $V$ is a bilinear form
\[( \; ,\;  ) : V \times V \to k\]
such that $(x \cdot v, w) = -(v, x \cdot w)$ for all
$x\in\mathfrak{g}$ and $v, w\in V$. Such a form determines a
homomorphism of $\mathfrak{g}$-modules $\beta : V \to V^\ast$ 
via the rule $\beta(v)(w) = (v, w)$, and conversely, a
$\mathfrak{g}$-module homomorphism $V \to V^\ast$ defines a
$\mathfrak{g}$-invariant form on $V$. Note that the form 
$(\; ,\; )$ is \emph{non-degenerate} if and only if the 
associated homomorphism $\beta$ is an isomorphism.

\subsection{The adjoint representation}
\label{xxsec1.7}
Now consider $V = \mathfrak{g}$ as a $\mathfrak{g}$-module 
via $x \cdot y = [x, y]$ for all $x, y\in\mathfrak{g}$. The 
following elementary result will be very useful later on.

\begin{lem} 
Suppose that $\mathfrak{g}$ has a $\mathfrak{g}$-invariant
bilinear form $(\; ,\; )$, and let $\beta$ be the associated
homomorphism. Then for all $x, y\in\mathfrak{g}$,
\begin{enumerate}[{(}a{)}]
\item $x\cdot \beta(y) = y\cdot \beta(-x)$, and
\item $[x, \mathfrak{g}] \subseteq \ker \beta(x)$.
\end{enumerate}
\end{lem}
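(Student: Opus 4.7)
The plan is to exploit the two defining features of the setup: that $\beta$ is a morphism of $\mathfrak{g}$-modules (this is what Section 1.6 records about invariant forms), and that the $\mathfrak{g}$-action on $V=\mathfrak{g}$ is the adjoint action, $x\cdot y=[x,y]$. Both parts will drop out almost formally; there is no real obstacle.

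For part (a), since $V=\mathfrak{g}$ carries the adjoint action, the $\mathfrak{g}$-module structure on $\mathfrak{g}^*$ is intertwined by $\beta$. Applying the morphism property of $\beta$ on both sides I would compute
\[
x\cdot\beta(y)=\beta(x\cdot y)=\beta([x,y])
\]
and, symmetrically,
\[
y\cdot\beta(-x)=\beta(y\cdot(-x))=\beta(-[y,x])=\beta([x,y]).
\]
Comparing yields (a). The only thing worth emphasising is that here the antisymmetry of the bracket is what converts the sign in $\beta(-x)$ into agreement with the left-hand side.

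For part (b), I would show directly that $\beta(x)$ annihilates every element of $[x,\mathfrak{g}]$. Given $y\in\mathfrak{g}$, the definition of $\beta$ and the fact that $[x,y]=x\cdot y$ give
\[
\beta(x)([x,y])=(x,[x,y])=(x,x\cdot y).
\]
Now invariance of the form reads $(x\cdot v,w)=-(v,x\cdot w)$, so $(x,x\cdot y)=-(x\cdot x,y)=-([x,x],y)=0$, and (b) follows. Thus both statements reduce to a one-line manipulation using, respectively, that $\beta$ is a $\mathfrak{g}$-map and that $(\;,\;)$ is $\mathfrak{g}$-invariant, together with the antisymmetry of $[\,,\,]$.
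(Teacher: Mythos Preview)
Your proof is correct and follows essentially the same approach as the paper: both parts use that $\beta$ is a $\mathfrak{g}$-module map together with the antisymmetry of the bracket for (a), and the invariance identity $(x\cdot v,w)=-(v,x\cdot w)$ together with $[x,x]=0$ for (b). The paper's computations are slightly more compressed, but the content is identical.
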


\begin{proof} (a) $x\cdot \beta(y) = \beta([x,y]) = \beta([y,-x]) 
= y\cdot \beta(-x)$.

(b) $\beta(x)([x,\mathfrak{g}]) = (x,[x,\mathfrak{g}]) =
([-x,x],\mathfrak{g}) = 0.$
\end{proof}

\subsection{The Killing form}
\label{xxsec1.8}
Recall that the \emph{Killing form} on $\mathfrak{g}$ is defined by
the rule
\[\mathcal{K}(x,y) = \tr(\ad(x)\ad(y))\]
for all $x,y\in\mathfrak{g}$. This is always an example of a
$\mathfrak{g}$-invariant bilinear form on $\mathfrak{g}$. If $\Char
k = 0$ then Cartan's Criterion states that $\mathfrak{g}$ is
semisimple if and only if the Killing form is non-degenerate.
However in positive characteristic it may happen that $\mathfrak{g}$
is simple but its Killing form is zero. This can happen even when
$\mathfrak{g}$ is of "classical type", meaning that it is a
Chevalley Lie algebra over $k$. There is a way around this problem
--- see the proof of Theorem \ref{xxsec3.4}.

\section{Frobenius pairs and the derivation hypothesis}
\label{xxsec2}
In this section we review a minimal amount of material from 
\cite{AWZ} that is most relevant for this paper. In particular 
we will recall the derivation hypothesis which plays a key 
role in \cite{AWZ}. Together with the main theorem of this
paper, the theory in \cite{AWZ} leads to a proof of the 
structure theorem for reflexive ideals in a class of 
Iwasawa algebras. 

\subsection{Frobenius pairs}
\label{xxsec2.1}
We go back to an arbitrary base field $K$ of characteristic $p$. Let $B$ be a commutative 
$K$-algebra; for example $B$ could be the polynomial algebra
$$B=\gr KG=\Sym(V\otimes K)$$
for some finite dimensional $\Fp$-vector space $V$.
The Frobenius map $x \mapsto x^p$ is a ring endomorphism 
of $B$ and gives an isomorphism of $B$ onto its image
\[B^{[p]} := \{b^p : b \in B\} \]
in $B$ provided that $B$ is reduced. Any derivation $d : B \to B$ 
is $B^{[p]}$-linear because
\[d(a^pb) = a^pd(b) + pa^{p-1}d(a)b = a^pd(b)\]
for all $a,b \in B$.

Let $t$ be a positive integer. Whenever $\{y_1,\ldots,y_t\}$ is a
$t$-tuple of elements of $B$ and $\alpha =
(\alpha_1,\ldots,\alpha_t)$ is a $t$-tuple of nonnegative integers,
we define
\[{\mathbf{y}}^{\alpha} =y_1^{\alpha_1}\cdots y_t^{\alpha_t}.\]
Let $[p-1]$ denote the set $\{0,1,\ldots,p-1\}$ and let $[p-1]^t$ be
the product of $t$ copies of $[p-1]$.

\begin{defn}\cite[Definition 2.2]{AWZ}
Let $A$ be a complete filtered $\FFp$-algebra and let $A_1$ be
a subalgebra of $A$. We always view $A_1$ as a filtered subalgebra
of $A$, equipped with the subspace filtration $F_nA_1 := F_nA \cap
A_1$. We say that $(A,A_1)$ is a \emph{Frobenius pair} if the
following axioms are satisfied:
\begin{enumerate}[{(} i {)}]
\item 
$A_1$ is closed in $A$,
\item 
$\gr A$ is a commutative noetherian domain, and we write 
$B=\gr A$,
\item 
the image $B_1$ of $\gr A_1$ in $B$
satisfies $B^{[p]} \subseteq B_1$, and
\item 
there exist homogeneous elements $y_1,\ldots,y_t \in B$ 
such that
\[B = \bigoplus_{\alpha \in [p-1]^t}  
B_1 \mathbf{y}^{\alpha}.\]
\end{enumerate}
\end{defn}

\subsection{Derivations on $B$}
\label{xxsec2.2} 
Let $B_1 \subseteq B$ be commutative rings of characteristic $p$, 
such that $B^{[p]} \subseteq B_1$ and
\[B = \bigoplus_{\alpha\in[p-1]^t} B_1\mathbf{y}^\alpha\]
for some elements $y_1,\ldots, y_t$ of $B$.

Fix $j = 1,\ldots,t$ and let $\epsilon_j$ denote the $t$-tuple of
integers having a $1$ in the $j$-th position and zeros elsewhere. We
define a $B_1$-linear map $\partial_j : B \to
B$ by setting
\[\partial_j\left(\sum_{\alpha\in[p-1]^t} u_\alpha
\mathbf{y}^\alpha \right) := \sum_{\stackrel{\alpha\in[p-1]^t}
{\alpha_j > 0}} \alpha_ju_\alpha \mathbf{y}^{\alpha - \epsilon_j}.\]

Let $\mathcal{D} := \Der_{B_1}(B)$ denote the
set of all $B_1$-linear derivations of $B$. 
An ideal $I$ of $B$ is called {\it $\mathcal{D}$-stable}
if $\mathcal{D}\cdot I\subseteq I$.

\begin{prop} \cite[Proposition 2.4]{AWZ}
\begin{enumerate}[{(}a{)}]
\item 
The map $\partial_j$ is a $B_1$-linear derivation of $B$ for each $j$.
\item 
$\mathcal{D} = \bigoplus_{j=1}^t B \partial_j.$
\item 
For any $x \in B$, $\mathcal{D}(x) = 0$ if and only if $x \in B_1$.
\item 
An ideal $I \subseteq B$ is $\mathcal{D}$-stable
if and only if it is controlled by $B_1$:
\[I = (I\cap B_1)B.\]
\end{enumerate}
\end{prop}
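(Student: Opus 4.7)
The strategy is to recognize the data as a free commutative extension. Since $y_j^p \in B^{[p]} \subseteq B_1$ and $\{\mathbf{y}^\alpha : \alpha \in [p-1]^t\}$ is a $B_1$-basis of $B$, the natural $B_1$-algebra map
\[
B_1[T_1, \ldots, T_t]/(T_j^p - y_j^p \mid 1 \leq j \leq t) \longrightarrow B, \qquad T_j \mapsto y_j,
\]
is a ring isomorphism: both sides are free $B_1$-modules of rank $p^t$ with matching monomial bases. Under this identification $\partial_j$ coincides with the formal partial derivative $\partial/\partial T_j$, which descends to the quotient in characteristic $p$ because $\partial/\partial T_j(T_k^p - y_k^p) = \delta_{jk}\, p\, T_j^{p-1} = 0$.

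Parts (a), (b), (c) follow quickly from this viewpoint. (a) is just the Leibniz rule for $\partial/\partial T_j$. For (b), the map $\bigoplus_j B\partial_j \to \mathcal{D}$ sending $(b_1,\ldots,b_t) \mapsto \sum b_j \partial_j$ is injective by evaluation at $y_i$ (using $\partial_j(y_i) = \delta_{ij}$), and surjective because any $B_1$-linear derivation $d$ is determined by its values $d(y_j)$ on the $B_1$-algebra generators: setting $b_j := d(y_j)$, the difference $d - \sum b_j \partial_j$ is a $B_1$-linear derivation vanishing on each $y_j$, hence zero. For (c), writing $x = \sum u_\alpha \mathbf{y}^\alpha$, the assumption $\partial_j(x) = 0$ for all $j$ together with the uniqueness of the $B_1$-basis expansion forces $\alpha_j u_\alpha = 0$ whenever $\alpha_j > 0$; since $\alpha_j \in \Fp^\times$ for $0 < \alpha_j \leq p-1$, only $u_0 \in B_1$ survives, and the converse is immediate.

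The heart of the argument is (d). The implication ``controlled $\Rightarrow$ $\mathcal{D}$-stable'' is immediate from the $B_1$-linearity of each $\partial_j$. For the converse, suppose $I$ is $\mathcal{D}$-stable and take $x = \sum_{\alpha \in [p-1]^t} u_\alpha \mathbf{y}^\alpha \in I$; by the basis property it suffices to show each $u_\alpha \in I \cap B_1$. I plan to extract these coefficients via Taylor-type formulas using the commuting iterated derivations $\partial^\alpha := \partial_1^{\alpha_1} \cdots \partial_t^{\alpha_t}$, which carry $I$ into $I$ by $\mathcal{D}$-stability. A direct calculation gives
\[
\partial^\alpha(\mathbf{y}^\beta) = \bigg(\prod_{j=1}^t \frac{\beta_j!}{(\beta_j - \alpha_j)!}\bigg)\mathbf{y}^{\beta - \alpha}
\]
when $\beta \geq \alpha$ coordinatewise, and $0$ otherwise. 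I then proceed by downward induction on $|\alpha| := \alpha_1 + \cdots + \alpha_t$: the base case $\alpha = (p-1,\ldots,p-1)$ is distinguished because only $\beta = \alpha$ contributes, yielding $\partial^\alpha(x) = ((p-1)!)^t u_\alpha \in I$ and thus $u_\alpha \in I \cap B_1$ by Wilson's theorem. Inductively, assuming $u_\beta \in I \cap B_1$ for all $|\beta| > k$, I apply $\partial^\alpha$ to $x' := x - \sum_{|\beta|>k} u_\beta \mathbf{y}^\beta \in I$ for any $\alpha$ with $|\alpha| = k$; the constraints $\beta \geq \alpha$ (coordinatewise) and $|\beta| \leq k$ force $\beta = \alpha$, so $\partial^\alpha(x') = \alpha!\, u_\alpha \in I$, and $\alpha! = \prod_j \alpha_j!$ lies in $\Fp^\times$ since every $\alpha_j \leq p-1$. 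The principal subtlety, and the reason the hypothesis confines exponents to the range $[p-1]^t$, is precisely the invertibility of these factorials modulo $p$.
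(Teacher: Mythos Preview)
The paper does not prove this proposition; it merely quotes it as \cite[Proposition 2.4]{AWZ}. There is therefore no ``paper's own proof'' to compare against.

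Your argument is correct and self-contained. The key observation --- that the hypotheses $y_j^p \in B^{[p]} \subseteq B_1$ and $B = \bigoplus_{\alpha \in [p-1]^t} B_1 \mathbf{y}^\alpha$ force the $B_1$-algebra isomorphism
\[
B_1[T_1,\ldots,T_t]/(T_j^p - y_j^p : 1 \leq j \leq t) \;\stackrel{\cong}{\longrightarrow}\; B,\qquad T_j \mapsto y_j,
\]
with $\partial_j$ becoming the formal $\partial/\partial T_j$ --- cleanly reduces (a), (b), (c) to one-line calculus facts. Your treatment of (d) via downward induction on $|\alpha|$ and iterated derivatives $\partial^\alpha$ is the standard Taylor-coefficient extraction; the only point that needs care, the invertibility of $\alpha! = \prod_j \alpha_j!$ in $\mathbb{F}_p$ for $\alpha \in [p-1]^t$, you have identified and handled correctly. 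One very minor remark: invoking Wilson's theorem in the base case is harmless but unnecessary, since $(p-1)!$ is already covered by the general observation that $m! \in \mathbb{F}_p^\times$ for $0 \leq m \leq p-1$; you could simply start the induction at $|\alpha| = t(p-1)+1$ with the vacuous hypothesis and treat the top case uniformly.
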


If $K=\Fp$ and $B=\Sym(V)$ then  $\mathfrak{D}=
\mathcal{D}$. Part (a) of the above is similar to 
Lemma \ref{xxsec1.3}.

\subsection{Inducing derivations on $\gr A$}
\label{xxsec2.3}
 Let $A$ be a filtered ring with associated graded
ring $B$ and let $a\in A$. Suppose that there is an
integer $n\geq 0$ such that
\[[a,F_kA] \subseteq F_{k-n}A\]
for all $k\in\mathbb{Z}$. This induces linear maps
\[\begin{array}{cccc}
\{a, - \}_n :& \frac{F_kA}{F_{k-1}A} &\to & 
\frac{F_{k-n}A}{F_{k-n-1}A} \\
\quad &\quad \\
& b + F_{k-1}A &\mapsto &[a,b] + F_{k-n-1}A
\end{array}
\]
for each $k\in\mathbb{Z}$ which piece together to give a graded
derivation
\[\{a, - \}_n : B \to B.\]

\begin{defn}\cite[Definition 3.2]{AWZ}
A \emph{source of derivations} for a Frobenius pair
$(A,A_1)$ is a subset $\mathbf{a} = \{a_0,a_1,a_2,\ldots\}$ of $A$
such that there exist functions $\theta, \theta_1:\mathbf{a}
\to\mathbb{N}$ satisfying the following conditions:
\begin{enumerate}[{(}i{)}]
\item
$[a_r, F_kA] \subseteq F_{k - \theta(a_r)}A$ for all $r\geq 0$ 
and all $k\in\mathbb{Z}$
\item
$[a_r, F_kA_1] \subseteq F_{k - \theta_1(a_r)}A$ for all $r\geq 0$
and all $k\in\mathbb{Z}$,
\item $\theta_1(a_r) - \theta(a_r) \to \infty$ as $r \to \infty$.
\end{enumerate}
Let $\mathcal{S}(A,A_1)$ denote the set of all sources of
derivations for $(A,A_1)$.
\end{defn}

\subsection{The derivation hypothesis}
\label{xxsec2.4} 
Let $\mathbf{a}$ be a source of derivations for a Frobenius pair
$(A, A_1)$ and $I$ be a graded ideal of $B$. We say that
the homogeneous element $Y$ of $B$ lies in the 
\emph{$\mathbf{a}$-closure} of $I$ if $\{a_r,Y\}_{\theta(a_r)}$ 
lies in $I$ for all $r \gg 0$.

Each source of derivations $\mathbf{a}$ gives rise to a sequence of
derivations $\{a_r,-\}_{\theta(a_r)}$ of $B$, and some or
all of these could well be zero. To ensure that we get an
interesting supply of derivations of $B$, we now
introduce a condition which holds for Iwasawa algebras of only
rather special uniform pro-$p$ groups.

Recall that $\mathcal{D}$ denotes the set of all
$B_1$-linear derivations of $B$ and
$\mathcal{S}(A,A_1)$ denotes the set of all sources of derivations
for $(A, A_1)$. The derivation hypothesis is really concerned with the
action of the derivations induced by $\mathcal{S}(A, A_1)$ on the
graded ring $B$.

\begin{defn}\cite[Definition 3.5]{AWZ}
Let $(A, A_1)$ be a Frobenius pair. We say that $(A,A_1)$ satisfies 
the {\sf derivation hypothesis} if for all homogeneous $X,Y \in B$ such that $Y$ lies in the
$\mathbf{a}$-closure of $XB$ for all $\mathbf{a}\in\mathcal{S}(A,A_1)$, we must have $\mathcal{D}(Y)
\subseteq XB$.
\end{defn}

Using this hypothesis, it is possible to prove the following 
control theorem for reflexive ideals:

\begin{trm}\cite[Theorem 5.3]{AWZ} Let $(A,A_1)$ be a Frobenius pair satisfying 
the derivation hypothesis, such that $B$ and $B_1$ are UFDs. Let $I$ be a reflexive two-sided
ideal of $A$. Then $I\cap A_1$ is a reflexive two-sided
ideal of $A_1$ and $I$ is controlled by $A_1$:
\[I = (I\cap A_1)\cdot A.\]
\end{trm}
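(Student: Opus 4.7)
The plan is to reduce the problem to a purely graded statement on $B$ and then to lift back up to $A$. Let $I$ be a reflexive two-sided ideal of $A$ and consider its associated graded ideal $\gr I \subseteq B$. By Proposition~\ref{xxsec2.2}(d), the equality $\gr I = (\gr I \cap B_1)\cdot B$ is equivalent to $\mathcal{D}$-stability of $\gr I$; once this is established, completeness of the filtration on $A$ allows one to lift control to $I = (I \cap A_1)\cdot A$ by a standard successive-approximation argument, and reflexivity of $I\cap A_1$ in $A_1$ follows from contraction properties of reflexive ideals along closed filtered subalgebras.

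For the graded control, I would first argue that $\gr I$ is itself reflexive in $B$; since $B$ is a UFD, this forces $\gr I$ to be principal, $\gr I = X\cdot B$ for some homogeneous $X \in B$. Because $I$ is two-sided, the axioms defining a source of derivations ensure that for every $\mathbf{a} = \{a_0, a_1, \ldots\} \in \mathcal{S}(A, A_1)$, the induced derivation $\{a_r,-\}_{\theta(a_r)}$ of $B$ preserves $\gr I = XB$ for all sufficiently large $r$. In particular $\{a_r, X\}_{\theta(a_r)} \in XB$, so $X$ lies in the $\mathbf{a}$-closure of $XB$ for every source $\mathbf{a}$, and the derivation hypothesis immediately gives $\mathcal{D}(X) \subseteq XB$. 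Equivalently $XB$ is $\mathcal{D}$-stable, and combining this with the directness of the decomposition $B = \bigoplus_{\alpha \in [p-1]^t} B_1 \mathbf{y}^\alpha$ gives $XB \cap B_1 = X \cdot B_1$, forcing $X$ to be associated to an element of $B_1$ and hence $\gr I = (\gr I \cap B_1)\cdot B$.

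The main obstacle I foresee is the justification that $\gr I$ itself is reflexive in $B$ whenever $I$ is reflexive in $A$. This is not automatic for arbitrary filtered rings and typically requires hypotheses like a Zariskian filtration together with Auslander-regularity of $B$; if this fails directly, one would instead work with the reflexive hull $\widetilde{\gr I} = XB$ and separately argue that $\mathcal{D}$-stability of $XB$ propagates to $\gr I$, by repeating the derivation-hypothesis argument for each homogeneous $Y\in\gr I$ in place of $X$, exploiting a factorization $Y = X c_Y$ and the fact that the quotient ideal $\gr I / X$ has trivial reflexive hull in the UFD $B$. A secondary delicate point is the final lifting from the graded to the filtered level, which requires the completeness axiom and an inductive construction to produce preimages in $I \cap A_1$ of generators of $\gr I \cap B_1$.
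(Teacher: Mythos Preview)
The paper does not prove this theorem: it is quoted verbatim as \cite[Theorem~5.3]{AWZ} and immediately followed by the sentence ``This is the main technical result of \cite{AWZ}, which eventually implies Corollary~\ref{xxsec0.3}.'' There is therefore no proof in the present paper against which your proposal can be compared; the argument lives entirely in the companion paper \cite{AWZ}.

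That said, your outline is broadly in the right spirit and you have correctly located the two genuine difficulties. The passage from reflexivity of $I$ in $A$ to reflexivity (or at least principality of the reflexive closure) of $\gr I$ in $B$ is not formal and does require the Zariskian/Auslander machinery you allude to; and the lifting step $\gr I=(\gr I\cap B_1)B \Rightarrow I=(I\cap A_1)A$ is exactly where completeness of the filtration and closedness of $A_1$ in $A$ are used. One point in your sketch is not quite right as written: once you retreat to the reflexive hull $XB\supseteq\gr I$, the derivation hypothesis applied to a homogeneous $Y\in\gr I$ only yields $\mathcal{D}(Y)\subseteq XB$, not $\mathcal{D}(Y)\subseteq\gr I$, so your proposed propagation of $\mathcal{D}$-stability from $XB$ back down to $\gr I$ does not go through directly; the actual argument in \cite{AWZ} handles this by working with $\gr I$ and its reflexive closure simultaneously rather than trying to descend stability after the fact.
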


This is the main technical result of \cite{AWZ}, which eventually implies Corollary \ref{xxsec0.3}.

\section{Proof of the main result}
\label{xxsec3}

\subsection{Normalizers of powerful Lie algebras}
\label{xxsec3.1}
Recall from \cite[\S 9.4]{DDMS} that a $\Zp$-Lie algebra $L$ is 
said to be \emph{powerful} if $L$ is free of finite rank as a 
module over $\Zp$ and $[L,L] \subseteq p^\epsilon L$, where 
$$\epsilon := \left\{ \begin{array}{l}2 \quad \mbox{ if }
\quad p=2 \\ 1 \quad \mbox{ otherwise.} \end{array} \right.$$ 
Let $L$ be a powerful $\Zp$-Lie algebra and
let $N = \{x \in \Qp L : [x,L] \subseteq L\}$ be the normalizer of
$L$ inside $\Qp L$ --- this is a $\Zp$-subalgebra of $\Qp L$ that
contains $L$ as an ideal. Note that $N$ is just the inverse image of
$\End_{\Zp}(L)$ under the homomorphism
\[\ad : \Qp L \to \End_{\Qp}(\Qp L),\]
so $N$ contains the center $Z(\Qp L)$ of $\Qp L$ and $N / Z(\Qp L)$ 
is a finitely generated $\Zp$-module. Hence
\[\mathfrak{g} := N/pN\]
is a finite dimensional $\Fp$-Lie algebra. Define
\[V := L/pL.\]
Then $V$ is naturally a $\mathfrak{g}$-module via the rule
\[(x + pN)\cdot (y + pL) = [x,y] + pL\]
for all $x\in N$ and $y\in L$. Let $\rho : \mathfrak{g} \to \End
(V)$ be the associated homomorphism.

\begin{lem}
Let $x\in N\backslash pN$ and $k \geq \epsilon$ be such
that $u = p^kx \in L$. Then
\begin{enumerate}[{(}a{)}]
\item $[u,L]\subseteq p^kL$
\item $[u,L]\nsubseteq p^{k+1}L$, and
\item $[u,pL]\subseteq p^{k+1}L$.
\end{enumerate}
\end{lem}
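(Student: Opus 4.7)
The plan is to unwind the definition of $N$ and to use that $L$, being free of finite rank over $\Zp$, is $\Zp$-torsion-free. Parts (a) and (c) should be essentially immediate from $x \in N$. Since $[x,L] \subseteq L$ by the definition of the normalizer, I would write
\[[u,L] = [p^k x,L] = p^k[x,L] \subseteq p^k L,\]
which is (a), and the same calculation with $pL$ in place of $L$ gives $[u,pL] = p^{k+1}[x,L] \subseteq p^{k+1}L$, which is (c). So both of these are really tautological and do not use the condition $k \geq \epsilon$.

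The interesting step is (b). I would prove it by contraposition: if $[u,L] \subseteq p^{k+1}L$, I want to deduce $x \in pN$, contradicting the hypothesis $x \in N \setminus pN$. Starting from $p^k[x,L] = [u,L] \subseteq p^{k+1}L$ and using that $L$ is $\Zp$-torsion-free, I cancel the factor $p^k$ to obtain $[x,L] \subseteq pL$. Then I set $y := p^{-1}x \in \Qp L$ and check that
\[[y,L] = p^{-1}[x,L] \subseteq p^{-1}(pL) = L,\]
so $y \in N$ by the definition of $N$, whence $x = py \in pN$, the desired contradiction.

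The only step requiring any care is the cancellation in (b), and its legitimacy rests entirely on the $\Zp$-torsion-freeness of $L$, which is built into the definition of \emph{powerful}. I do not expect to need the condition $k \geq \epsilon$ directly in the argument; it is presumably present to guarantee compatibility with the ambient powerful structure (e.g., so that later applications can relate $u$ back to the pro-$p$ group $G$ via the exponential/logarithm of \cite[Theorem~9.10]{DDMS}) rather than for this lemma itself.
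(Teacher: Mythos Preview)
Your proof is correct and matches the paper's own argument essentially verbatim: parts (a) and (c) are declared ``clear,'' and (b) is proved by the same contrapositive, cancelling $p^k$ to get $[x,L]\subseteq pL$ and concluding $p^{-1}x\in N$. Your observation that $k\geq\epsilon$ is not used here is also accurate.
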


\begin{proof}
The first and the last assertions are clear. If $[u,L] \subseteq p^{k+1}L$
then $[x,L]\subseteq pL$ and so $p^{-1}x \in N$. But this forces $x \in pN$,
which we have assumed not to be the case.
\end{proof}

\subsection{Derivations for Iwasawa algebras}
\label{xxsec3.2}
By \cite[Theorem 9.10]{DDMS} there is a natural assignment
$$G\mapsto \log (G),\qquad L\mapsto \exp(L)$$
which determines an equivalence between the category of uniform 
pro-$p$ groups and the category of powerful $\Zp$-Lie algebras.

Now let $G = \exp(L)$ be the uniform pro-$p$ group
corresponding to our powerful Lie algebra $L$, and let $K$ denote an
arbitrary field of characteristic $p$. By \cite[Proposition
6.6]{AWZ}, $(KG,KG^p)$ is a Frobenius pair, and by \cite[Lemma
6.2(d) and Proposition 6.4]{AWZ}, there is a canonical isomorphism
\[\Sym(V\otimes K) \stackrel{\cong}{\longrightarrow} \gr KG.\]
Recall that $\rho$ is the map $\mathfrak{g}\to \End(V)\subseteq 
\End(V\otimes K)$ defined in Section \ref{xxsec3.1}.

\begin{prop} 
Let $x \in N\backslash pN$ and let $k\geq 1$ be such that
$p^kx \in L$. Let $a = \exp(p^kx) \in G$. Then
\[\{a,-\}_{p^k-1} = \rho(x + pN)^{[p^k]}\]
as derivations of $\Sym(V\otimes K)$.
\end{prop}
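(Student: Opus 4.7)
Both sides of the asserted identity are $K$-derivations of $B = \Sym(V\otimes K)$ that raise the Sym grading by $p^k-1$. By Lemma \ref{xxsec1.3}, any such derivation is determined by its restriction to the generating subspace $V\otimes K \subseteq B$, so by $K$-linearity it suffices to verify the identity on an element of the form $v = w + pL \in V$ with $w\in L$; we may further assume $w\in L\setminus pL$, for otherwise $v=0$ and both derivations kill $v$.

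Set $c := \exp(w)\in G$, so that $c-1$ lies in $I \setminus I^2$ and is a lift of $v$ in the filtered algebra $KG$. Then $\{a,-\}_{p^k-1}(v)$ is, by definition, the image of $ac - ca$ in $\gr_{p^k}KG$. Writing
\[
ac - ca \;=\; (g-1)\cdot ca, \qquad g := aca^{-1}c^{-1}\in G,
\]
and observing that the principal symbol $\sigma(ca) = 1 \in \gr_0 KG$, we reduce the problem to computing the principal symbol $\sigma(g-1)$ of the group commutator.

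To compute $g$, apply the Baker--Campbell--Hausdorff formula inside $\Qp L$. Setting $u := p^k x \in L$, we obtain
\[
\log g \;=\; [u,w] \;+\; (\text{iterated brackets in } u, w \text{ of length}\geq 3).
\]
The leading term $[u,w]=p^k[x,w]$ lies in $p^kL$, while each higher iterated bracket contains at least one nested bracket of two elements of $L$ and hence picks up an extra factor of $p$ via $[L,L]\subseteq p^\epsilon L$. Combined with the standard bound $v_p(n!)\leq (n-1)/(p-1)$ on the $p$-adic denominators in the BCH coefficients, this places every correction term in $p^{k+1}L$. Hence $\log g = p^k y''$ for some $y''\in L$ satisfying $y'' \equiv [x,w] \pmod{pL}$.

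Uniformity of $G$ now gives $g = \exp(p^k y'')=\exp(y'')^{p^k}$, and the Frobenius identity $(1+t)^{p^k}=1+t^{p^k}$ in characteristic $p$ yields $g - 1 = (\exp(y'') - 1)^{p^k}$. If $[x,w]\notin pL$, then $y''\in L\setminus pL$ and $\sigma(\exp(y'') - 1) = y''+pL = [x,w]+pL \in \gr_1 KG$, so taking $p^k$-th powers gives
\[
\sigma(g - 1) \;=\; ([x,w]+pL)^{p^k} \;=\; \rho(x+pN)(v)^{p^k} \;=\; \rho(x+pN)^{[p^k]}(v),
\]
as required. In the degenerate case $[x,w]\in pL$, one has $y''\in pL$, which forces $\exp(y'') - 1 \in I^p$ and hence $g - 1 \in I^{p^{k+1}}$; its image in $\gr_{p^k} KG$ vanishes, matching $\rho(x+pN)^{[p^k]}(v) = 0$. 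The principal obstacle is the BCH estimate of the third paragraph: one must carefully track the $p$-valuations of all nested iterated commutators against the (possibly $p$-divisible) BCH denominators, using powerfulness of $L$ and the lower bound on $k$ to guarantee that each correction term truly lies in $p^{k+1}L$.
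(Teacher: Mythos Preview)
The paper does not give a self-contained proof here: its entire argument is the sentence ``This is a rephrasing of \cite[Theorem 6.8]{AWZ}, using Lemma \ref{xxsec3.1}.'' Your proposal supplies precisely the direct computation that lies behind that citation --- reduce to generators $v\in V$, lift to $c=\exp(w)$, rewrite $ac-ca=(g-1)ca$ with $g=aca^{-1}c^{-1}$, compute $\log g$ by the Baker--Campbell--Hausdorff commutator formula, and then use the characteristic-$p$ identity $h^{p^k}-1=(h-1)^{p^k}$ together with the domain property of $\gr KG$ to read off the principal symbol. This is the standard route and is essentially the content of \cite[Theorem~6.8]{AWZ}, so substantively you are taking the same approach as the paper; you have unpacked the cited result rather than invoked it.

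Your outline is correct, and the step you single out as the ``principal obstacle'' is indeed the only place needing work. The clean way to organise that estimate is to note that every degree-$m$ term ($m\geq 3$) in the BCH commutator series is a rational multiple of an iterated bracket built on the innermost $[u,w]\in p^kL$; each of the remaining $m-2$ outer brackets by an element of $L$ contributes a factor $p^{\epsilon}$ via $[L,p^jL]\subseteq p^{j+\epsilon}L$, placing the bare bracket in $p^{k+(m-2)\epsilon}L$, and the Lazard bound on the $p$-adic denominator of the coefficient then lands the full term in $p^{k+1}L$. One purely cosmetic remark: in the filtration convention of \S\ref{xxsec2.3} the symbol of $c-1$ sits in negative degree and $\{a,-\}_{p^k-1}$ targets $\gr_{-p^k}$; your ``$\gr_{p^k}KG$'' should be read as $I^{p^k}/I^{p^k+1}\cong\Sym^{p^k}(V\otimes K)$.
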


\begin{proof} 
This is a rephrasing of \cite[Theorem 6.8]{AWZ},
using Lemma \ref{xxsec3.1}.
\end{proof}

\subsection{Verifying the derivation hypothesis}
\label{xxsec3.3}
We start with a powerful $\Zp$-Lie algebra $L$ and 
define $\mathfrak{g}$ and $V$ as in Section \ref{xxsec3.1}.
Let $G=\exp(L)$. We say $L$ \emph{satisfies hypothesis $(L*)$} if 
the following hold:

\begin{enumerate}
\item[(L0)] 
there exists a $\mathfrak{g}$-module isomorphism 
$\zeta : \mathfrak{g} \to V$,
\item[(L1)] 
$\sum_{\beta} \mathfrak{g}\cdot \beta(\mathfrak{g})=
\mathfrak{g}^*$ where the sum runs over all possible
$\mathfrak{g}$-module homomorphisms $\beta: 
\mathfrak{g}\to \mathfrak{g}^*$.
\end{enumerate}

Since $\mathfrak{g}\cdot \beta(\mathfrak{g})=
\beta([\mathfrak{g},\mathfrak{g}])$, condition
(L1) is equivalent to 
$\sum_{\beta}\beta([\mathfrak{g},\mathfrak{g}])=
\mathfrak{g}^*$.
Clearly, the following two conditions imply (L1):
\begin{itemize}
\item
$\mathfrak{g}$ admits a non-degenerate 
$\mathfrak{g}$-invariant bilinear form $(\; ,\; )$, and 
\item
$\mathfrak{g}$ is \emph{perfect} : 
$[\mathfrak{g},\mathfrak{g}] = \mathfrak{g}$.
\end{itemize}

\begin{thm}
Let $L$ be a powerful Lie algebra satisfying hypothesis
$(L*)$ and let $G=\exp(L)$. Then the Frobenius pair 
$(KG,KG^p)$ satisfies the derivation hypothesis.
\end{thm}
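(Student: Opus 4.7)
The plan is to translate the hypothesis into explicit information about the derivations $\rho(x)^{[p^r]}$ acting on $Y$, and then to combine $(L*)$, Proposition~\ref{xxsec1.4} (applied twice), and both parts of Lemma~\ref{xxsec1.7} to deduce $\mathcal{D}(Y) \subseteq XB$ via a coprimality argument in the UFD $B = \Sym(V\otimes K)$.

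First, for each $x \in \mathfrak{g}$ I would lift to some $x' \in N\setminus pN$ and apply the hypothesis to the source $a_r = \exp(p^{k+r}x')$, which by Proposition~\ref{xxsec3.2} induces the derivations $\rho(x)^{[p^{k+r}]}$. This yields $\rho(x)^{[p^r]}(Y) \in XB$ for all sufficiently large $r$, and since $\mathfrak{g}$ is finite there is a uniform $s_0\geq 0$ with $\mathcal{E}^{(x)}_{s_0}(Y) \subseteq XB$ for every $x\in\mathfrak{g}$, where $\mathcal{E}^{(x)}_s := \sum_{r\geq s} B\cdot \rho(x)^{[p^r]}$. Since $\mathcal{D}(Y)\subseteq XB$ amounts to $f(Y) \in XB$ for every $f\in V^\ast$, and since (L0) together with (L1) gives $V^\ast = \sum_\beta \beta(\rho(\mathfrak{g})(V))$, it is enough to show that $Z := \rho(x)^\ast(\beta(v))(Y)$ lies in $XB$ for every $x\in\mathfrak{g}$, every $v\in V$, and every $\mathfrak{g}$-module homomorphism $\beta:V\to V^\ast$; here I use the identity $\beta(\rho(x)(v)) = -\rho(x)^\ast(\beta(v))$.

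The key move is to apply Proposition~\ref{xxsec1.4} twice, both times with $s=s_0$. The first application, with $(\varphi, g) = (\rho(x), \beta(v))$, gives $P_1\cdot Z \in XB$ where $P_1 = \prod_{l\in S_1} v_l^{p^{s_0}}$ and $S_1 = \mathbb{P}(\rho(x)(V)) \setminus \mathbb{P}(\ker\beta(v))$. Next, Lemma~\ref{xxsec1.7}(a), transferred to $V$ via the $\mathfrak{g}$-isomorphism $\zeta$, gives the identity $\rho(x)^\ast(\beta(v)) = -\rho(\zeta^{-1}(v))^\ast(\beta(\zeta(x)))$, so a second application of Proposition~\ref{xxsec1.4} with $(\varphi, g) = (\rho(\zeta^{-1}(v)), \beta(\zeta(x)))$ delivers $P_2\cdot Z \in XB$ where $P_2 = \prod_{l\in S_2} v_l^{p^{s_0}}$ and $S_2 = \mathbb{P}(\rho(\zeta^{-1}(v))(V)) \setminus \mathbb{P}(\ker\beta(\zeta(x)))$.

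The main obstacle is then to deduce $Z\in XB$ from $P_1Z, P_2Z \in XB$, and this is where Lemma~\ref{xxsec1.7}(b) enters: transferred to $V$ it asserts $\rho(\zeta^{-1}(v))(V) \subseteq \ker\beta(v)$, so any line $l\in S_2$ sits in $\mathbb{P}(\ker\beta(v))$ and is thereby excluded from $S_1$. Hence $S_1\cap S_2 = \emptyset$, and since distinct $v_l$'s are pairwise coprime linear forms in the polynomial ring $B$, $\gcd(P_1, P_2) = 1$. A standard valuation-wise argument in the UFD $B$ --- for each irreducible factor $q$ of $X$, at least one of $v_q(P_1), v_q(P_2)$ vanishes, forcing $v_q(Z) \geq v_q(X)$ --- then yields $X\mid Z$, as required. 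The delicate point is this two-fold use of Proposition~\ref{xxsec1.4}: Lemma~\ref{xxsec1.7}(a) is what lets us produce the second application with a \emph{different} polynomial $P_2$, while Lemma~\ref{xxsec1.7}(b) is what arranges the crucial disjointness of the line sets $S_1$ and $S_2$ that makes the coprimality, and hence the whole argument, work.
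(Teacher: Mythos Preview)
Your proposal is correct and follows essentially the same route as the paper: lift each $x\in\mathfrak{g}$ to a source of derivations, obtain $\rho(x)^{[p^r]}(Y)\in XB$ uniformly for $r\geq s$, then apply Proposition~\ref{xxsec1.4} twice and use Lemma~\ref{xxsec1.7}(a) to match the two resulting derivations and Lemma~\ref{xxsec1.7}(b) to force the two line sets to be disjoint, so that the two auxiliary products are coprime in the UFD $B$ and $(x\cdot\beta(y))(Y)\in XB$; finally (L1) finishes. The only difference is cosmetic: the paper uses (L0) to identify $V$ with $\mathfrak{g}$ at the outset and works entirely inside $\Sym(\mathfrak{g}\otimes K)$ with $\varphi=\ad(x)$, whereas you keep $V$ and $\mathfrak{g}$ separate and carry the isomorphism $\zeta$ through the computations; both lead to the same coprimality argument. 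One small point worth making explicit is that $(\exp(p^{k+r}x'))_{r\geq 0}$ is indeed a source of derivations for $(KG,KG^p)$, which the paper justifies by citing \cite[Corollary 6.7]{AWZ}.
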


\begin{proof}
Let $X, Y$ be homogeneous elements of $B = \gr KG$ such
that $Y$ lies in the $\mathbf{a}$-closure of $XB$ for all
$\mathbf{a} \in \mathcal{S}(KG, KG^p)$. Let $x\in \mathfrak{g}$ be a
non-zero element, and suppose that $x = x' + pN$ for some $x' \in N \backslash pN$. Let $k \geq \epsilon$ be such that $p^kx' \in L$. Then
\[(\exp(p^kx'), \exp(p^{k+1}x'), \exp(p^{k+2}x'),\ldots)\]
is a source of derivations for $(KG, KG^p)$, by \cite[Corollary
6.7]{AWZ}. Hence there exists a large integer $s_x \geq k$, such
that
\[\{\exp(p^rx'), Y\}_{p^r - 1} \in XB\]
for all $r\geq s_x$. Hence by Proposition \ref{xxsec3.2} we see that
\[\rho(x)^{[p^r]}(Y) \in XB\]
for all $r \geq s_x$. Since $\mathfrak{g}$ is finite, if we set $s
:= \max \{s_x : x \in \mathfrak{g}\backslash 0\}$ then
\[\rho(x)^{[p^r]}(Y) \in XB\]
for all $r \geq s$ and all $x\in \mathfrak{g}$.

Let us identify $\Sym(\mathfrak{g} \otimes K)$ with $\Sym(V\otimes
K)$ using the isomorphism $\zeta$ in (L0). Then
\[\ad(x)^{[p^r]}(Y) \in XB\]
for all $r \geq s$ and all $x\in \mathfrak{g}$. 

Let $(\;,\;)$ be any $\mathfrak{g}$-invariant bilinear
form on $\mathfrak{g}$, and let $\beta :
\mathfrak{g} \to \mathfrak{g}^\ast$ be the associated homomorphism.

Fix $x,y\in \mathfrak{g}$, let $\varphi := \ad(x) \in
\End(\mathfrak{g})$ and let $g = \beta(y) \in \mathfrak{g}^\ast$.
Then $\varphi^\ast(g) = -x \cdot g$ by the remarks made in
Section \ref{xxsec1.5}, and $\varphi(\mathfrak{g}) = [x, \mathfrak{g}]$.
Using Proposition \ref{xxsec1.4} we can deduce that
\[\left(\prod_{l \in \mathbb{P}([x,\mathfrak{g}]) \setminus \mathbb{P}
(\ker \beta(y))} v_l^{p^s}\right) (x \cdot \beta(y))(Y) \in
XB.\] Swapping $x$ and $y$, we obtain
\[\left(\prod_{l \in \mathbb{P}([y,\mathfrak{g}]) \setminus \mathbb{P}
(\ker \beta(x))} v_l^{p^s}\right) (y \cdot \beta(x))(Y) \in
XB.\] Now $x \cdot \beta(y) = -y \cdot \beta(x)$ by Lemma
\ref{xxsec1.7}(a), and
\[\left(\mathbb{P}([x,\mathfrak{g}]) - \mathbb{P}(\ker \beta(y))\right)
\cap \left(\mathbb{P}([y, \mathfrak{g}]) - \mathbb{P}(\ker
\beta(x))\right) = \emptyset\] by Lemma \ref{xxsec1.7}(b). Hence the
two products occurring above are coprime, which allows us to deduce
that
\[(x \cdot \beta(y))(Y) \in XB\]
for all $x,y\in\mathfrak{g}$. Since $\mathfrak{g}^\ast$ generates
$\mathcal{D}$ as a $B$-module, it will be now enough to
show that $\{x \cdot \beta(y) : x, y\in \mathfrak{g}\}$ spans
$\mathfrak{g}^\ast$. But this is (L1).
\end{proof}

\subsection{Chevalley Lie algebras over $\Zp$}
\label{xxsec3.4} 
Let $\Phi$ be an indecomposable root system, let $C := \Phi(\Zp)$ 
be the Lie algebra over $\Zp$ constructed from a
Chevalley basis \cite[p.37]{CSM}, let 
$t \geq \epsilon$ and consider the powerful Lie algebra $L = p^t C$. 
Let $\mathfrak{g} = N/pN$ be the finite dimensional $\Fp$-Lie algebra constructed from $L$ in $\S\ref{xxsec3.1}$.

Recall that $p$ is a \emph{nice prime} for $\Phi$ if $p\geq 5$ 
and if $p \nmid n+1$ when $\Phi$ is the root system $A_n$.

\begin{thm} 
Retain the notation as above and suppose that $p$ is a 
nice prime for $\Phi$. Then
\begin{enumerate}[{(}a{)}]
\item 
$\Phi(\Fp)$ is a non-abelian simple $\Fp$-Lie algebra,
\item 
$N = C$ and $\mathfrak{g} = \Phi(\Fp)$,
\item 
$L$ satisfies $(L*)$.
\end{enumerate}
\end{thm}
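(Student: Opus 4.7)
The plan is to handle the three assertions (a), (b), (c) in sequence, with each feeding into the next. Part (a) is a classical result on Chevalley Lie algebras in positive characteristic: for nice primes $\Phi(\Fp)$ is non-abelian simple. I would simply cite this from a standard reference such as Humphreys or Steinberg rather than reprove it. The restriction $p\nmid n+1$ in type $A_n$ is exactly what kills the otherwise non-trivial centre of $\mathfrak{sl}_{n+1}(\Fp)$, spanned by the identity matrix.

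For part (b), the key input is that a non-abelian simple Lie algebra has trivial centre, which by (a) applies to $\Phi(\Fp)$. Since $\Qp L = \Qp C$, one has $N = \{x \in \Qp C : [x, C] \subseteq C\}$. Given $x \in N\setminus C$, I would write $x = p^{-s}y$ with $y \in C\setminus pC$ and $s\geq 1$ minimal; then $[y, C] \subseteq p^s C \subseteq pC$ reduces modulo $p$ to $\ad(\bar y) = 0$ on $\Phi(\Fp)$, forcing $\bar y$ into the (trivial) centre. Hence $y \in pC$, a contradiction. Thus $N = C$, and $\mathfrak{g} = N/pN = C/pC = \Phi(\Fp)$.

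For part (c), I would verify the two conditions (L0) and (L1) separately. Condition (L0) is immediate: since $N = C$ and $L = p^t C$ by (b), multiplication by $p^t$ defines a $\Zp$-module isomorphism $C/pC \to p^t C/p^{t+1}C$ which directly intertwines the adjoint action of $\Phi(\Fp) = \mathfrak{g}$ on itself with the action of $\mathfrak{g}$ on $V$ defined in $\S\ref{xxsec3.1}$. Condition (L1) is the main obstacle. Since $\mathfrak{g}$ is simple non-abelian by (a), it is perfect, so by the remarks immediately following the definition of $(L*)$ it suffices to produce a single non-degenerate $\mathfrak{g}$-invariant bilinear form on $\mathfrak{g}$. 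The natural candidate is the Killing form $\mathcal{K}$, but as noted in $\S\ref{xxsec1.8}$ it can degenerate in positive characteristic. The nice prime hypothesis is precisely tailored to avoid this: $p\geq 5$ rules out the small bad primes that appear in the Killing form discriminants of the exceptional types, while $p\nmid n+1$ in type $A_n$ ensures the Killing form $2(n+1)\tr(XY)$ on $\mathfrak{sl}_{n+1}(\Fp)$ is non-degenerate. I therefore expect the proof of (L1) to reduce to a case-by-case check on each indecomposable component, using explicit computations of the Killing form discriminant on a Chevalley basis and, where $\mathcal{K}$ itself fails, substituting the trace form of a convenient faithful representation (for instance the natural representation of a classical type).
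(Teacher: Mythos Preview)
Your treatment of parts (a), (b), and condition (L0) in part (c) is essentially identical to the paper's own argument: cite simplicity of $\Phi(\Fp)$ for nice primes, use trivial centre to force $N = C$ via the ``$p^{-s}y$'' contradiction, and take multiplication by $p^t$ as the isomorphism $\zeta$.

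The one substantive divergence is in (L1). You propose verifying non-degeneracy of an invariant form by case-by-case discriminant computations on a Chevalley basis, falling back to trace forms of auxiliary representations where the Killing form $\mathcal{K}$ degenerates. This would eventually work but is laborious and not actually carried out in your proposal. The paper instead uses a uniform trick: it replaces $\mathcal{K}$ by the \emph{normalized} Killing form $\mathcal{K}/(2h)$, where $h$ is the Coxeter number. This form is integral on $\Phi(\Zp)$ and, by a result of Gross--Nebe, reduces to a nonzero form on $\Phi(\Fp)$ for every nice prime. Non-degeneracy then follows in one line: the radical of any invariant form is an ideal, and $\mathfrak{g}$ is simple by (a), so a nonzero invariant form is automatically non-degenerate. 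Your approach buys nothing extra and costs a type-by-type verification; the paper's normalization-plus-simplicity argument handles all types at once with a single external citation.
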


\begin{proof}
(a) By construction, $\Phi(\Fp)$ is never abelian.
Under our assumptions on $p$, $\Phi(\Fp)$ is simple
\cite[p.181]{S}.

(b) Clearly $C \subseteq N$. Let $x \in N \backslash C$, for a
contradiction. Then we can find $k > 0$ such that $p^kx \in C
\backslash pC$. But now
\[[p^kx,C] \subseteq p^kC \subseteq pC\]
so $p^kx + pC$ is a non-zero central element of $C/pC = \Phi(\Fp)$. This
is a contradiction, because $\Phi(\Fp)$ is non-abelian simple by
part (a). Hence $N = C$ and $\mathfrak{g} = \Phi(\Fp)$.

(c) Let $\zeta : \mathfrak{g} \to V$ be defined by the obvious rule
\[\zeta(x + pN) = p^tx + pL.\]
This is clearly a $\mathfrak{g}$-module isomorphism.

Consider the \emph{normalized Killing form} on $\mathfrak{g}$,
defined by
\[(x + pN,y + pN) = \frac{\tr(\ad(x)\ad(y))}{2h}\]
for all $x,y\in N = \Phi(\Zp)$, where $h$ is the Coxeter number for
$\Phi$. This form is clearly $\mathfrak{g}$-invariant. By
\cite[Proposition 4]{GN} this form is non-zero and hence the radical
$\mathfrak{r} := \{x \in \mathfrak{g} : (x,\mathfrak{g}) = 0\}$ of
the form is a proper subspace of $\mathfrak{g}$. But $\mathfrak{r}$
is an ideal of $\mathfrak{g}$ and $\mathfrak{g}$ is simple, so
$\mathfrak{r} = 0$ and hence the form is non-degenerate.

Finally, $\mathfrak{g}$ is perfect because
$[\mathfrak{g},\mathfrak{g}]$ is an ideal of $\mathfrak{g}$, which
must be the whole of $\mathfrak{g}$ since $\mathfrak{g}$ is
non-abelian simple by part (a). The assertion follows from
the comments made before Theorem \ref{xxsec3.3}.
\end{proof}

\subsection{Proof of Theorem A}
\label{xxsec3.5}
\begin{lem} Let $L=L_1\oplus L_2$ where both $L_1$ and $L_2$
are powerful $\Zp$-Lie algebras. If $L_i$ satisfies condition
$(L*)$ for $i=1,2$, then so does $L$.
\end{lem}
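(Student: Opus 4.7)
The plan is to reduce everything to the direct-sum decomposition of $L$ and chase through the definitions. The decisive observation will be that the normalizer, and hence the associated objects $\mathfrak{g}$ and $V$, split compatibly as direct sums.

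First I would verify that if $N_i$ denotes the normalizer of $L_i$ inside $\Qp L_i$, then $N = N_1 \oplus N_2$. This is immediate: writing $x = x_1 + x_2$ with $x_i \in \Qp L_i$, the relation $[L_1, L_2] = 0$ forces $[x, L] \subseteq L$ to decompose as $[x_i, L_i] \subseteq L_i$ for $i=1,2$. Consequently $\mathfrak{g} := N/pN = \mathfrak{g}_1 \oplus \mathfrak{g}_2$ as $\Fp$-Lie algebras, and $V := L/pL = V_1 \oplus V_2$. Moreover, because $[N_i, L_j] = 0$ for $i \neq j$, the component $\mathfrak{g}_i$ acts trivially on both $\mathfrak{g}_j$ and $V_j$ whenever $i \neq j$. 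In particular, $\mathfrak{g} = \mathfrak{g}_1 \oplus \mathfrak{g}_2$ and $V = V_1 \oplus V_2$ are direct sum decompositions as $\mathfrak{g}$-modules.

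For condition (L0), I would simply take $\zeta := \zeta_1 \oplus \zeta_2 : \mathfrak{g}_1 \oplus \mathfrak{g}_2 \to V_1 \oplus V_2$, where each $\zeta_i$ is the $\mathfrak{g}_i$-module isomorphism provided by the hypothesis; the mutually trivial actions ensure this is a $\mathfrak{g}$-module isomorphism.

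For condition (L1), the key step is that any $\mathfrak{g}_i$-module homomorphism $\beta_i : \mathfrak{g}_i \to \mathfrak{g}_i^{\ast}$ extends to a $\mathfrak{g}$-module homomorphism $\widetilde{\beta_i} : \mathfrak{g} \to \mathfrak{g}^{\ast}$ by the formula $\widetilde{\beta_i}(x_1 + x_2) = \beta_i(x_i)$, where we view $\mathfrak{g}_i^{\ast}$ as sitting inside $\mathfrak{g}^{\ast} = \mathfrak{g}_1^{\ast} \oplus \mathfrak{g}_2^{\ast}$ via extension by zero. Equivariance is a direct check using that $[\mathfrak{g}_1, \mathfrak{g}_2] = 0$ and that $\mathfrak{g}_j$ acts trivially on $\mathfrak{g}_i^{\ast}$ for $i \neq j$. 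Since $[\mathfrak{g}, \mathfrak{g}] = [\mathfrak{g}_1, \mathfrak{g}_1] \oplus [\mathfrak{g}_2, \mathfrak{g}_2]$, we obtain
\[
\widetilde{\beta_i}([\mathfrak{g}, \mathfrak{g}]) = \beta_i([\mathfrak{g}_i, \mathfrak{g}_i]).
\]
Summing over all such $\beta_i$ and invoking (L1) for each $L_i$ shows that $\mathfrak{g}_i^{\ast}$ lies in $\sum_{\beta} \beta([\mathfrak{g}, \mathfrak{g}])$ for $i = 1, 2$, and hence all of $\mathfrak{g}^{\ast} = \mathfrak{g}_1^{\ast} \oplus \mathfrak{g}_2^{\ast}$ does.

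The argument is entirely formal; there is no real obstacle, only some bookkeeping needed to confirm that the $\mathfrak{g}$-module structures on $\mathfrak{g}$, $V$, and $\mathfrak{g}^{\ast}$ respect the direct sum decomposition. The mild subtlety worth highlighting is the extension-by-zero construction for $\widetilde{\beta_i}$, which produces enough homomorphisms $\mathfrak{g} \to \mathfrak{g}^{\ast}$ even though we are only given homomorphisms on each summand separately.
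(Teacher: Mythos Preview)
Your proof is correct and follows exactly the approach the paper takes: the paper's proof simply asserts that $N = N_1 \oplus N_2$, deduces $\mathfrak{g} = \mathfrak{g}_1 \oplus \mathfrak{g}_2$ and $V = V_1 \oplus V_2$, and then says the result ``follows from the definition of $(L*)$''. You have carefully unpacked that last step, including the extension-by-zero construction for $\widetilde{\beta_i}$, which is precisely the detail the paper leaves implicit.
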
 

\begin{proof} Let $N, \mathfrak{g}$ and $V$ be defined as 
in Section \ref{xxsec3.1} for the Lie algebra $L$ (and 
similar terms for $L_1$ and $L_2$). It is clear that 
$N=N_1\oplus N_2$; consequently, $\mathfrak{g}=
\mathfrak{g}_1\oplus \mathfrak{g}_2$ and 
$V=V_1\oplus V_2$. The assertion now follows from 
the definition of $(L*)$.
\end{proof}

\begin{proof}[Proof of Theorem A]
Applying the lemma and Theorem \ref{xxsec3.4}, we see that $p^t\Phi(\Zp)$ satisfies $(L*)$. The result now follows
from Theorem \ref{xxsec3.3}.
\end{proof}

\section{Remarks on non-nice primes}
\subsection{} 
\label{xxsec4.1}
Suppose $\Phi=A_n$ and $p$ divides $n+1$, and let $G$ be a uniform group of type $\Phi$. Let $h_1, \ldots, h_n$ be the co-roots occurring in a Chevalley basis for the $\Zp$-Lie algebra $\Phi(\Zp) \cong \mathfrak{sl}_{n+1}(\Zp)$, and let 
\[z:=\sum_{i=1}^n ih_i \in \Phi(\Zp).\]
Then $\mathfrak{g} := \Phi(\Fp)$ has a one-dimensional centre generated by the image $\overline{z}$ of $z$ in $\mathfrak{g}$, and this fact causes the derivation hypothesis to fail for $(KG,KG^p)$.

However, a version of Corollary \ref{xxsec0.3} still holds.

\begin{thm}
Let $G$ be a uniform pro-$p$ group of type $A_n$. Then $\Omega_G$ has no non-trivial 
two-sided reflexive ideals.
\end{thm}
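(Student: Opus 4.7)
When $p$ is a nice prime for $A_n$ the theorem is already a special case of Corollary \ref{xxsec0.3}, so the interesting case is $p \mid n+1$, where Theorem A cannot be invoked because the derivation hypothesis fails. However, the failure is localised --- it happens only in the one-dimensional direction $\Fp \bar{z}$ of $\mathfrak{g} = \mathfrak{sl}_{n+1}(\Fp)$ --- and my plan is to exploit this.

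First I would re-examine the proof of Theorem \ref{xxsec3.3}. The trace form $(X,Y) \mapsto \tr(XY)$ on $\mathfrak{sl}_{n+1}(\Fp)$ is a $\mathfrak{g}$-invariant bilinear form whose radical is exactly $\Fp \bar{z}$, and for $n \geq 2$ one still has $[\mathfrak{g},\mathfrak{g}] = \mathfrak{g}$ (the case $n = 1$, $p = 2$ being handled separately in \cite[Section 8]{AWZ}). Running the argument of Theorem \ref{xxsec3.3} with this form in place of the normalized Killing form, one obtains a \emph{partial derivation hypothesis}: for any reflexive two-sided ideal $I$ of $\Omega_G$ and homogeneous $X \in \gr I$, $Y \in B = \gr \Omega_G$ with $Y$ in the $\mathbf{a}$-closure of $XB$ for all $\mathbf{a} \in \mathcal{S}(\Omega_G, \Omega_{G^p})$, one gets $d(Y) \in XB$ for every $B_1$-linear derivation $d$ of $B$ that annihilates $\bar{z}$.

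Combining this with Proposition \ref{xxsec2.2}(d) and adapting the proof of \cite[Theorem 5.3]{AWZ}, one concludes that any reflexive two-sided ideal $I$ of $\Omega_G$ is controlled not by $\Omega_{G^p}$ alone but by the slightly larger closed subalgebra $A^{\ast} \subseteq \Omega_G$ generated by $\Omega_{G^p}$ together with a fixed lift $\tilde{z} \in \Omega_G$ of $\bar{z} \in V$. The remaining step is to exploit the distinctive structural feature of $z$: since $[z, \Phi(\Zp)] \subseteq p\,\Phi(\Zp)$, the element $a := \exp(p^t z) \in G$ is \emph{almost central}, in the sense that $[a, g] \in G^{p^{k+1}}$ whenever $g \in G^{p^k}$. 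Following the template of \cite[Section 8]{AWZ}, this almost-centrality should allow an iterative descent in which the control by $A^{\ast}$ is refined by cutting down the $\tilde{z}$-direction one power at a time, forcing $I = 0$ in the limit.

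The hard step is the last one: promoting the formal almost-centrality of $a$ into a genuine iterative tool that kills reflexive ideals uniformly in $n$. This is the non-abelian generalisation of the delicate $\Phi = A_1$, $p=2$ argument of \cite[Section 8]{AWZ}, and is where I expect the bulk of the technical work to lie.
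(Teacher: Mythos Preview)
Your overall strategy is sound and close to the paper's, but the paper organises the two halves more cleanly by avoiding any ``partial derivation hypothesis'' or ad hoc adaptation of \cite[Theorem 5.3]{AWZ}. Rather than working with the single bad Frobenius pair $(\Omega_G,\Omega_{G^p})$, the paper inserts the intermediate uniform subgroup $G_1=\exp(L_1)$ with $L_1=pL+p^t\Zp z$ --- your closed subalgebra $A^{\ast}$ is exactly $\Omega_{G_1}$ --- and observes that both $(\Omega_G,\Omega_{G_1})$ and $(\Omega_{G_1},\Omega_{G^p})$ are genuine Frobenius pairs, each satisfying the \emph{full} derivation hypothesis. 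For the first pair this is because $\Der_{\gr\Omega_{G_1}}(\gr\Omega_G)\cong B\otimes\mathfrak{f}^{\ast}$ where $\mathfrak{f}^{\ast}=\langle\bar{z}\rangle^{\perp}\subset\mathfrak{g}^{\ast}$, and your run of the Theorem \ref{xxsec3.3} argument with the trace form produces precisely these derivations; thus \cite[Theorem 5.3]{AWZ} applies unmodified and yields control by $\Omega_{G_1}$. For the second pair, $\Der_{\gr\Omega_{G^p}}(\gr\Omega_{G_1})$ is free of rank one, and the almost-centrality of $\exp(p^tz)$ you identified is exactly what makes the second half of \cite[Proposition 8.1]{AWZ} go through to verify the derivation hypothesis for $(\Omega_{G_1},\Omega_{G^p})$.

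So the step you flagged as hard largely dissolves: instead of a bespoke iterative descent in the $\tilde{z}$-direction, one simply applies the control theorem a second time to a rank-one Frobenius pair, obtaining control by $\Omega_{G^p}$; the further iteration down to $\Omega_{G^{p^k}}$ and the conclusion $I=0$ then proceed as in the nice-prime case. The conceptual point you were missing is that what looks like a \emph{partial} hypothesis for one Frobenius pair is a \emph{full} hypothesis for a finer one, and factoring one bad pair into two good ones lets the existing machinery of \cite{AWZ} run untouched.
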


The proof is similar to the one given in \cite[Section 8]{AWZ} 
and needs the following lemma. 

\begin{lem} Let $L = p^t\Phi(\Zp)$ for some $t\geq 1$, let $L_1=pL + p^t \Zp z$ and let $L_2=pL$. Write $G=\exp(L)$, $G_1=\exp(L_1)$ and $G_2=\exp(L_2)$. Then
\begin{enumerate}[{(}a{)}]
\item
The Frobenius pair $(KG,KG_1)$ satisfies the derivation
hypothesis.
\item
The Frobenius pair $(KG_1,KG_2)$ satisfies the derivation
hypothesis.
\end{enumerate}
\end{lem}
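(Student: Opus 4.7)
The plan is to verify the derivation hypothesis for each Frobenius pair along the lines of Theorem \ref{xxsec3.3}, making modifications to account for the failure of hypothesis $(L*)$ in the non-nice case. A direct computation (using $[z, \Phi(\Zp)] \subseteq p\Phi(\Zp)$, which holds because $\bar z$ is central in $\Phi(\Fp)$) shows that the normalizer $N = \{x \in \Qp L : [x,L] \subseteq L\}$ equals $\Phi(\Zp) + \Zp p^{-1}z$, so $\mathfrak{g} := N/pN$ fits in a non-split extension
\[
0 \to \mathfrak{psl}_{n+1}(\Fp) \to \mathfrak{g} \to \Fp \to 0
\]
in which the ideal is the image of $\Phi(\Zp)$; in particular $\mathfrak{g}$ is no longer isomorphic to $V := L/pL$ as a $\mathfrak{g}$-module.

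For part (a), consider the Frobenius pair $(KG, KG_1)$. The image of $L_1$ in $V$ is the one-dimensional $\mathfrak{g}$-invariant line spanned by $\bar z' := p^tz + pL$, so the image $B_1$ of $\gr KG_1$ in $B = \gr KG$ equals $B^{[p]}[\bar z']$, and hence $\Der_{B_1}(B) = B\cdot\{f \in V^\ast : f(\bar z') = 0\}$. The first step is to verify that for every $x' \in N \setminus pN$ the sequence $(\exp(p^{t+r}x'))_{r\geq 0}$ is a source of derivations for $(KG, KG_1)$; the key commutator estimate $[p^kx', L_1] \subseteq p^{k+1}L$ follows because $\bar z$ is central. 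Proposition \ref{xxsec3.2} then gives $\rho(\bar x)^{[p^r]}(Y) \in XB$ for every $\bar x \in \mathfrak{g}$ and every $r \gg 0$. To conclude, one produces, for each $f \in V^\ast$ with $f(\bar z')=0$, a Proposition \ref{xxsec1.4}-type identity expressing $f(Y)$ as a $B$-linear combination of such derivations. The auxiliary tool needed is a $\mathfrak{g}$-module homomorphism $\beta : V \to V^\ast$ with image exactly $\{f : f(\bar z')=0\}$, which I would construct by exploiting the non-degenerate trace form on the simple piece $\mathfrak{psl}_{n+1}(\Fp)$; together with $\mathfrak{g}$ being perfect, Proposition \ref{xxsec1.4} and Lemma \ref{xxsec1.7} applied as in Theorem \ref{xxsec3.3} then yield $\mathcal{D}(Y) \subseteq XB$.

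For part (b), consider the Frobenius pair $(KG_1, KG_2)$. Writing $B_{(1)} := \gr KG_1$ and $B_{(2)}$ for the image of $\gr KG_2$ in $B_{(1)}$, the transversal from $B_{(2)}$ to $B_{(1)}$ is one-dimensional, spanned by $\bar y := p^tz + pL_1$, so $\Der_{B_{(2)}}(B_{(1)}) = B_{(1)}\partial_{\bar y}$ is a rank-one $B_{(1)}$-module. The single generator $\partial_{\bar y}$ can be accessed via the source $(\exp(p^{t+r}z))_{r\geq 0} \subseteq KG_1 \setminus KG_2$, combined with the Killing form argument applied to the simple ideal $\mathfrak{psl}_{n+1}(\Fp) \subseteq \mathfrak{g}^{(1)} := N_1/pN_1$ (whose structure is analogous to that of $\mathfrak{g}$). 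Proposition \ref{xxsec1.4} and the coprimality argument of Theorem \ref{xxsec3.3} then give $\partial_{\bar y}(Y) \in XB_{(1)}$.

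The main obstacle lies in part (b): one must re-derive the sources-of-derivations machinery and the analog of Proposition \ref{xxsec3.2} for the non-standard Frobenius pair $(KG_1, KG_2)$ (in which $KG_2 \neq K(G_1)^p$), and pin down the explicit $\mathfrak{g}^{(1)}$-module structure on $V^{(1)} := L_1/pL_1$. Even in part (a), the construction of $\beta$ is subtle because $V$ and $\mathfrak{g}$ need not be isomorphic as $\mathfrak{g}$-modules in the non-nice case, so one cannot simply transport an invariant form on $\mathfrak{g}$ to $V$. The technicalities should closely parallel the $A_1, p=2$ analysis in \cite[Section 8]{AWZ}.
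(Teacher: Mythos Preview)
Your plan for part (a) is in the right spirit but needlessly elaborate. The paper does not compute the full normalizer $N$ or worry about whether $N/pN$ and $V$ are isomorphic; it simply takes $\mathfrak{g} := \Phi(\Fp)$ (acting on $V = L/pL \cong \Phi(\Fp)$ via the adjoint), pulls back the non-degenerate invariant form on the simple quotient $\mathfrak{f} := \Phi(\Fp)/\langle\bar z\rangle$ to a degenerate form on $\mathfrak{g}$, and observes that $\mathfrak{g}\cdot\beta(\mathfrak{g}) = \mathfrak{f}^\ast$, which is exactly the annihilator of $\bar z$ in $\mathfrak{g}^\ast$. Since $\Der_{B_1}(B) = B\otimes\mathfrak{f}^\ast$ for this Frobenius pair, the argument of Theorem \ref{xxsec3.3} goes through verbatim. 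Your attempt to build $\beta:V\to V^\ast$ and your concern that ``$V$ and $\mathfrak{g}$ need not be isomorphic'' are both avoided by working with $\Phi(\Fp)$ rather than the larger quotient $N/pN$.

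Part (b) contains a genuine gap and a misjudgement of difficulty. The paper explicitly says (b) is the \emph{easier} case, because $\Der_{B_2}(B_1)\cong B_1$ is free of rank one, and then invokes the argument from the second half of \cite[Proposition 8.1]{AWZ}. Your proposed source $(\exp(p^{t+r}z))_{r\geq 0}$ cannot work: since $\bar z$ is central in $\Phi(\Fp)$, one has $[p^{-1}z,p^tz]=0$, so the induced endomorphism $\varphi:=\rho_1(\overline{p^{-1}z})$ of $V_1=L_1/pL_1$ annihilates the $\bar y$-direction. Consequently every $\varphi^\ast(g)=g\circ\varphi$ vanishes on $\bar y$, and Proposition \ref{xxsec1.4} can never produce $\partial_{\bar y}$ from this source. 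Moreover, invoking the Killing form on $\mathfrak{psl}_{n+1}(\Fp)$ targets the $\mathfrak{f}^\ast$-directions, which are precisely the derivations \emph{not} needed for $(KG_1,KG_2)$. You should instead follow the rank-one argument of \cite[Proposition 8.1]{AWZ}, using sources coming from non-central elements of $\Phi(\Zp)$ whose bracket with $z$ is nonzero modulo $p^2$.
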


\begin{proof}[Sketch of the proof] 
(a) Let $\mathfrak{f} := \Phi(\Fp)/\langle \overline{z}\rangle$. This is a simple Lie algebra \cite[p.181]{S} and there is an 
$\mathfrak{f}$-invariant non-degenerate bilinear form on $\mathfrak{f}$ \cite[6.4(b)]{J}. 
It induces an $\mathfrak{g}$-invariant bilinear form 
$(\;,\;)$ on $\mathfrak{g}$. Let $\beta : \mathfrak{g} \to \mathfrak{g}^\ast$ be the 
$\mathfrak{g}$-module homomorphism associated to $(\;,\;)$. Then the image of 
$\beta$ is equal to $\mathfrak{f}^*$, the annihilator of $\langle \overline{z}\rangle$ in $\mathfrak{g}^\ast$.

The proof of Theorem \ref{xxsec3.3} implies that if $Y$
lies in the $\mathbf{a}$-closure of $XB$ for all 
sources of derivations $\mathbf{a}$ of $(KG,KG_1)$, then $(B\otimes 
\mathfrak{g}\cdot\beta(\mathfrak{g}))(Y)\in XB$.
By the last paragraph, $\mathfrak{g}\cdot\beta(\mathfrak{g})
=\mathfrak{f}^*$. The assertion is proved by noting
that $\mathcal{D}:=\Der_{B_1}(B)$ is isomorphic 
to $B\otimes \mathfrak{f}^*$.

(b) This is an easier case than (a). Since $\Der_{B_2}(B_1)$
is isomorphic to $B_1\otimes K\overline{z}^*\cong B_1$, we can apply the argument
in the second half of the proof of \cite[Proposition 8.1]{AWZ}, after making the
appropriate changes. Therefore $(KG_1,KG_2)$ satisfies 
the derivation hypothesis.
\end{proof}

Using these techniques, we have verified that Corollary \ref{xxsec0.3} holds for all $(p, \Phi)$, except for $\{p=2, \Phi \in \{B_n, C_n, D_n, F_4, E_7\}\}$ and $\{p=3,\Phi\in \{G_2, E_6\}\}$. We believe that it holds in these exceptional cases as well.

\section*{Acknowledgments}
The authors would like to thank the referee for his helpful comments. K.Ardakov thanks the University of Sheffield for financial support. F. Wei is supported by a research fellowship from the China 
Scholarship Council and by the Department of Mathematics 
at the University of Washington. J.J. Zhang is supported by 
the US National Science Foundation and the Royalty Research 
Fund of the University of Washington.

\end{document}